\newtheorem{theo}{Theorem}[section]
\newtheorem{coro}[theo]{Corollary}
\newtheorem{lemm}[theo]{Lemma}
\theoremstyle{definition}
\newtheorem{defi}[theo]{Definition}
\newtheorem{rema}[theo]{Remark}
\newcommand{\qbin}[2]{\genfrac{[}{]}{0pt}{}{#1}{#2}_q}
\DeclareMathOperator{\cro}{cr}
\DeclareMathOperator{\inv}{inv}
\DeclareMathOperator{\rlm}{rlm}
\title{A $q$-analog of Schläfli and Gould identities on Stirling numbers}
\author{ Matthieu Josuat-Vergès  \thanks{Supported by ANR CARMA (ANR-12-BS01-0017). }  \\
 CNRS and Institut Gaspard Monge \\
 Université Paris-Est Marne-la-Vallée \\
 France \\
 \small \tt matthieu.josuat-verges@u-pem.fr
}
\date{}
\begin{document}
      
\maketitle

\begin{abstract}
Stirling numbers of both kinds are linked to each other via two combinatorial identities due
to Schläfli and Gould. Using $q$-analogs of Stirling numbers defined as inversion generating
functions, we provide $q$-analogs of the two identities. The proof is computational and
we leave open the problem of finding a more combinatorial one.
\end{abstract}

\section{Introduction}

(Unsigned) Stirling numbers of the first and second kinds, respectively denoted
$S_1(n,k)$ and $S_2(n,k)$, form two infinite triangular arrays of integers with numerous combinatorial
properties (see
\href{https://oeis.org/A130534}{A130534} and \href{https://oeis.org/A008277}{A008277} in \cite{sloane}). 
They can be defined via their exponential generating functions:
\begin{align*}
 \sum_{0 \leq k \leq n }  S_1(n,k) x^k \frac{z^n}{n!} = \frac{1}{(1-z)^x}, \qquad
 \sum_{0 \leq k \leq n }  S_2(n,k) x^k \frac{z^n}{n!} = e^{ x (e^z-1) }.
\end{align*}
A well-known way to relate the two Stirling triangles is to define two matrices $A$ and $B$ by 
\begin{align*}
   A_{i,j} = \begin{cases}
                 (-1)^{i-j} S_1(i,j)  & \text{if } i\geq j, \\
                 0                    & \text{otherwise},
             \end{cases} \qquad
   B_{i,j} = \begin{cases}
                 (-1)^{i-j} S_2(i,j)  & \text{if } i\geq j, \\
                 0                    & \text{otherwise},
             \end{cases}
\end{align*}
which are then related by $A^{-1} = B$.
But there is another way: the following two identities express Stirling numbers of the first and second kind in terms of each other,
in a symmetric way:
\begin{align} \label{id1}
   S_1(n,k) &= (-1)^{n-k} \sum_{j=0}^{n-k} (-1)^j
   \binom{n-1+j}{n-k+j} \binom{2n-k}{n-k-j} S_2(n-k+j,j), \\
 \label{id2}
   S_2(n,k) &= (-1)^{n-k} \sum_{j=0}^{n-k} (-1)^j
   \binom{n-1+j}{n-k+j} \binom{2n-k}{n-k-j} S_1(n-k+j,j).
\end{align}
The first one is essentially due to Schläfli~\cite{schlafli1852,schlafli1867}, and to Schlömilch~\cite{schlomilch} 
in a different form. It is particularly interesting because we can deduce an exact formula for $S_1(n,k)$ as a double 
sum, using the exact formula 
\begin{equation*}
 S_2(n,k) = \frac{1}{k!} \sum_{j=0}^k (-1)^{k-j} \binom{k}{j} j^n.
\end{equation*}
One century later, Gould \cite{gould} obtained the second one.
The proofs of both results rely on the analysis of exponential generating functions of Stirling numbers,
and are present in several textbooks such as \cite{charalambides}, \cite{comtet}, \cite{quaintance}.
See also \cite{gouldkwongquaintance} and \cite{sun} for recent works related with these identities.

Our goal is to show that both \eqref{id1} and \eqref{id2} remains true when 
$S_1(n,k)$ and $S_2(n,k)$ are replaced with natural $q$-analogs, denoted $S_1[n,k]$ and $S_2[n,k]$
(see Theorems~\ref{thqid1} and~\ref{thqid2}).
The $q$-analog $S_2[n,k]$ was studied in \cite{rubeyjosuatverges}, and
the $q$-analog $S_1[n,k]$ is new and gives a natural companion to $S_2[n,k]$.
Indeed, they share similar properties: they can be defined combinatorially in terms of inversions
(see Definitions~\ref{defqst2} and~\ref{defqst1}),
and their ordinary generating functions have continued fraction expansions
(see Theorems~\ref{qfrac2} and~\ref{qfrac1}), from which 
we can obtain closed formulas in terms of binomial and $q$-binomial coefficients
(see Theorems~\ref{thformulaqst2} and~\ref{thformulaqst1}).

Note that this study of the new $q$-analog $S_1[n,k]$ is not just for its own sake,
since the proofs of Theorems~\ref{thqid1} and~\ref{thqid2} rely on the formula in Theorem~\ref{thformulaqst1}.
Once we know the closed formulas for $S_1[n,k]$ and $S_2[n,k]$, which take the form of expansions of 
$(1-q)^{n-k}S_1[n,k]$ and $(1-q)^{n-k}S_2[n,k]$ as linear combinations of the 
``shifted'' $q$-binomial coefficients 
\[ 
  q^{\binom {j+1} 2 } \qbin i j, 
\]
the proof of our identities are essentially a coefficient wise calculation. In particular, they do
not involve $q$-binomials, they are binomial identities of hypergeometric type.

The existence of such $q$-analogs for the two identities is somewhat unexpected, for two different reasons. 
Firstly, they are extremely simple, since each Stirling number is just replaced with its $q$-analog
(this simplicity is the reason why we were able to find the identities via computer experiments).
Secondly, the known proofs of the original identities rely on the \emph{exponential} generating functions of
Stirling numbers, whereas the $q$-Stirling numbers only have nice \emph{ordinary} generating functions. 
The proof is not really enlightening, since it is computational, and a more combinatorial proof is still
to be found. For example, it would be natural to try to interpret the alternating sums as an 
inclusion-exclusion process.

\section{Common definitions and formulas}

We use the standard notation for $q$-binomial coefficients: if $0\leq k \leq n$, 
\begin{align*}
 \qbin{n}{k} = \frac{[n]_q!}{[k]_q![n-k]_q!},
\end{align*}
where $[n]_q! = [1]_q [2]_q \dots [n]_q$ and $[i]_q = 1+q+\dots + q^{i-1}$. And $\qbin{n}{k}=0$ if $k<0$ or $k>n$. We have:
\begin{align} \label{recqbin}
 \qbin{n}{k} = \qbin{n-1}{k-1} + q^k \qbin{n-1}{k} = q^{n-k} \qbin{n-1}{k-1} + \qbin{n-1}{k}.
\end{align}

The Pochhammer symbol is $(\alpha)_m = \alpha(\alpha+1)\dots(\alpha+m-1)$.
The general hypergeometric series is:
\begin{equation*}
    {}_{r+1} F_r \Big( \begin{array}{c} \alpha_1 \, ; \, \dots \, ; \, \alpha_{r+1} \\ \beta_1 \, ; \, \dots \, ; \, \beta_r \end{array}  \Big| \, z \,  \Big)
   = \sum_{n\geq0} \frac{ (\alpha_1)_n \dots (\alpha_{r+1})_n z^n}{ (\beta_1)_n \dots (\beta_r)_n n!}.
\end{equation*}
We will need the Pfaff-Saalschütz summation theorem (see \cite[Section~4.4]{aigner} or \cite[Chapter 5, \S 51]{rainville}):
\begin{equation} \label{pfaffsaalschutz}
   {}_3 F_2 \Big( \begin{array}{c} -m \, ; \, \alpha \, ; \, \beta \\ \gamma \, ; \, \alpha+\beta-\gamma-m+1 \end{array}  \Big| \, 1 \,  \Big)
   = \frac{ (\gamma-\alpha)_m (\gamma-\beta)_m }{ (\gamma)_m (\gamma-\alpha-\beta)_m },
\end{equation}
where $m$ is a nonnegative integer. Note that we have in particular, by letting $\beta$ tend to infinity:
\begin{equation} \label{gauss}
   {}_2 F_1 \Big( \begin{array}{c} -m \, ; \, \alpha \\ \gamma \end{array}  \Big| \, 1 \,  \Big)
   = \frac{ (\gamma-\alpha)_m }{ (\gamma)_m }.
\end{equation}
These equalities hold as long as the denominator in the right hand side does not vanish.

We also need the {\it contiguity relation}:
\begin{align} \label{eqcontig}
\begin{split}
  (\alpha_3-\alpha_4) \times\, {}_4 F_3 \Big( \begin{array}{c} \alpha_1 \, ; \,  \alpha_2 \, ; \, \alpha_3 \, ; \, \alpha_4 \\ \beta_1 \, ; \, \beta_2 \, ; \, \beta_3 \end{array}  \Big| 1 \Big)
  =
     \alpha_3 \times\, & {}_4 F_3 \Big( \begin{array}{c} \alpha_1 \, ; \,  \alpha_2 \, ; \, \alpha_3+1 \, ; \, \alpha_4 \\ \beta_1 \, ; \, \beta_2 \, ; \, \beta_3 \end{array}  \Big| 1 \Big) \\
   &- 
     \alpha_4 \times\, {}_4 F_3 \Big( \begin{array}{c} \alpha_1 \, ; \,  \alpha_2 \, ; \, \alpha_3 \, ; \, \alpha_4+1 \\ \beta_1 \, ; \, \beta_2 \, ; \, \beta_3 \end{array}  \Big| 1 \Big).
\end{split}
\end{align}
It is written here for a $ {}_4 F_3$ series but holds in general for ${}_{r+1} F_r $.
It is elementary to prove, but see \cite[Equation~14, p. 82]{rainville}.

We will use three kinds of lattice paths in the discrete quarter plane $\mathbb{N}^2$:
\begin{itemize}
 \item A {\it Dyck path} of length $2n$ is a path  from $(0,0)$ to $(2n,0)$ with steps
$(1,1)$ or $(1,-1)$. 
 \item A {\it Motzkin path} of length $n$ is a path from $(0,0)$ to $(n,0)$ with steps
$(1,1)$ or $(1,0)$ or $(1,-1)$. 
 \item A {\it Schröder path} of length $2n$ is a path from $(0,0)$ to $(2n,0)$ with steps
$(1,1)$ or $(1,-1)$ or $(2,0)$. 
\end{itemize}
Note that in a Dyck or Motzkin path (but not in a Schröder path), the number of steps equals the length. 
The $y$-coordinate of $(x,y) \in \mathbb{N}^2$ is called its {\it height}. Similarly, the height of 
a level step $(1,0)$ or $(2,0)$ in some particular path is well defined, and other steps in the path 
have an initial height and final height (the path is seen as going from left to right).

Suppose know that we assign {\it weights} to some steps of the path according to some specific rules. 
The weight is usually a number or a monomial in some indeterminates or a sum of those.
The weight of the path is the product of the weights of his steps, and the weighted generating function is the sum of such weights, over
some set of paths that depends on the context.
Also, if some step in a path has a weight $x+y$ we can as well consider that this path comes in two different ``colored'' versions,
one where this step has weight $x$ and the other where it is $y$. 

Weighted Motzkin paths are the appropriate tool for the combinatorial study of continued fractions that
naturally arise as the moment generating function of orthogonal polynomials (see \cite{aigner,flajolet}),
called J-fractions.

\begin{theo}[cf.~\cite{flajolet}]  \label{chemins}
Let $M_n$ be the weighted generating function of Motzkin paths of length $n$, where the weights are:
\begin{itemize}
 \item $a_i$ for a step $(1,-1)$ starting at height $i$, 
 \item $b_i$ for a step $(1,0)$ at height $i$.
\end{itemize}
Then we have:
\begin{equation*}
\sum_{ n \geq 0} M_n z^n =
\cfrac{1}{
    1 - b_0 z - \cfrac{a_1 z^2}{
        1 - b_1 z - \cfrac{ a_2 z^2}{
            1 - b_2 z - \cfrac{ a_3 z^2}{\ddots}
        }
    }
}.
\end{equation*}
\end{theo}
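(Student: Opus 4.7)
\medskip

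The plan is to prove the formula by the standard first-return decomposition argument, keeping track of weights. For each $h \geq 0$, let $F^{(h)}(z)$ denote the weighted generating function of Motzkin paths that stay at height $\geq h$ throughout, starting and ending at height $h$, with the same weighting rules; so $F^{(0)}(z)$ is the series in the theorem statement. A translation by $h$ in the vertical direction provides a weight-preserving bijection between such paths and ordinary Motzkin paths whose weights are shifted: the down steps contribute $a_{i+h}$ and the level steps contribute $b_{i+h}$.

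The first step is to decompose a Motzkin path $P$ counted by $F^{(h)}(z)$ according to the nature of its first step. There are three cases: $P$ is empty; $P$ begins with a level step at height $h$, followed by an arbitrary such path; or $P$ begins with an up step, stays at height $\geq h+1$ until it first returns to height $h$, and then continues as an arbitrary path. In the third case, the portion strictly above height $h$ is, after removing the opening up step and the matching down step, a path counted by $F^{(h+1)}(z)$. The opening up step carries no weight, the matching down step at height $h+1$ carries weight $a_{h+1}$, and together they contribute two units to the length, i.e.\ a factor $z^2$. This gives the functional equation
\begin{equation*}
   F^{(h)}(z) = 1 + b_h \, z \, F^{(h)}(z) + a_{h+1}\, z^2\, F^{(h+1)}(z)\, F^{(h)}(z),
\end{equation*}
which after solving for $F^{(h)}(z)$ becomes
\begin{equation*}
   F^{(h)}(z) = \cfrac{1}{1 - b_h z - a_{h+1} z^2\, F^{(h+1)}(z)}.
\end{equation*}

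Iterating this identity starting from $h = 0$ unfolds, level by level, into exactly the claimed $J$-fraction. Formally one must argue that the iteration converges in the ring $\mathbb{Z}[a_1,a_2,\ldots,b_0,b_1,\ldots]\llbracket z \rrbracket$: after $N$ iterations, the difference between $F^{(0)}(z)$ and the truncated continued fraction involves the factor $z^{2N} F^{(N)}(z)$, whose valuation in $z$ tends to infinity, so the continued fraction is well-defined as a formal power series and equals $F^{(0)}(z)$.

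The only real subtlety is the bijective claim that paths staying above height $h$ correspond, after vertical translation, to ordinary Motzkin paths with shifted weights; once this is granted, the recursion and its iteration are routine. I expect no genuine obstacle beyond being careful that the level step $(1,0)$ at the very beginning carries its weight $b_h$ (not $b_0$) and that the matching down step closing an initial arch sits at height $h+1$, so it contributes $a_{h+1}$; getting these indices right is what produces the correct continued fraction.
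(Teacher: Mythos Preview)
Your argument is correct: the first-step decomposition yields the recursion
\[
   F^{(h)}(z) = \frac{1}{1 - b_h z - a_{h+1} z^2\, F^{(h+1)}(z)},
\]
and iterating it produces the $J$-fraction, with convergence in the formal power series ring since the remainder after $N$ levels has $z$-valuation at least $2N$.

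Note, however, that the paper does not actually prove this theorem: it is stated as a citation to Flajolet and used as a black box. So there is no ``paper's own proof'' to compare against. That said, your decomposition is essentially identical to the one the paper \emph{does} carry out for the closely analogous Theorem~\ref{chemins3} on Schr\"oder paths: there the authors factor a path into indecomposable pieces (writing the generating function as $(1-X)^{-1}$), observe that a non-trivial indecomposable piece is an up step, a shifted path, and a down step, and then iterate. Your presentation packages the same idea as a single recursion on $F^{(h)}$ rather than as a factorization followed by a shift, but the content is the same.
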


The theorem below is about another kind of continued fractions, called T-fractions.
They were shown by Roblet and Viennot \cite{robletviennot} to be connected with two-points Padé approximants 
and with an enumeration of Dyck paths with certain weights. 

\begin{theo}[cf.~\cite{robletviennot}]  \label{chemins2}
Let $D_{2n}$ be the weighted generating function of Dyck paths of length $2n$, where the weights are:
\begin{itemize}
 \item $b_i$ for a step $(1,-1)$ starting at height $i$ and following a step $(1,1)$,
 \item $c_i$ for a step $(1,-1)$ starting at height $i$ and following a step $(1,-1)$.
\end{itemize}
Then we have:
\begin{equation*}
\sum_{ n \geq 0} D_{2n} z^n =
\cfrac{1}{
    1 - (b_1-c_1) z - \cfrac{ c_1 z}{
        1 - (b_2-c_2) z - \cfrac{ c_2 z}{
            1 - (b_3-c_3) z - \cfrac{ c_3 z}{\ddots}
        }
    }
}.
\end{equation*}
\end{theo}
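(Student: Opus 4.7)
The plan is to derive the continued fraction by iterating a functional equation obtained from a first-return decomposition. For each $i \geq 0$, let $H_i(z)$ denote the weighted generating function of Dyck paths that start and end at height $i$ and never go below $i$, with the same weighting rules as in the statement. Since $\sum_{n\geq 0} D_{2n} z^n = H_0(z)$, it suffices to establish
\begin{equation*}
  H_i(z) = \cfrac{1}{1 - (b_{i+1}-c_{i+1})z - c_{i+1}\, z\, H_{i+1}(z)},
\end{equation*}
after which iterating on $i$ immediately produces the T-fraction.

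To derive this identity, I would write any non-empty Dyck path $P$ at level $\geq i$ uniquely in the first-return form $P = U P' D P''$, where $P'$ is a (possibly empty) Dyck path at level $\geq i+1$ and $P''$ is a Dyck path at level $\geq i$. The $U$ and $D$ together contribute a factor $z$, while $P''$ contributes $H_i(z)$. The subtle point is the weight of the central $D$, which starts at height $i+1$ and so equals either $b_{i+1}$ or $c_{i+1}$ depending on the previous step. If $P'$ is empty, that $D$ immediately follows the leading $U$, contributing $b_{i+1}$; if $P'$ is non-empty, then $P'$ must end at height $i+1$ without going below $i+1$, forcing its last step to be a $D$, and the central $D$ then contributes $c_{i+1}$. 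Combining both cases gives
\begin{equation*}
  H_i(z) = 1 + z\bigl( b_{i+1} + c_{i+1}(H_{i+1}(z)-1) \bigr) H_i(z)
         = 1 + z \bigl( (b_{i+1}-c_{i+1}) + c_{i+1} H_{i+1}(z) \bigr) H_i(z),
\end{equation*}
which rearranges to the displayed formula.

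The main conceptual obstacle is tracking the ``which step precedes'' rule across the recursive decomposition. No ambiguity arises at the start of $P''$ (the first step of a non-empty Dyck path at level $\geq i$ is forced to be a $U$), so the only place the rule matters is precisely the central $D$, where the dichotomy empty/non-empty of $P'$ produces the characteristic $(b_{i+1}-c_{i+1})$ level term that distinguishes T-fractions from the J-fractions of Theorem~\ref{chemins}.
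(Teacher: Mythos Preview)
Your argument is correct. The first-return decomposition you use is the standard one, and you handle the only delicate point—that the weight of the central down step depends on whether $P'$ is empty—cleanly; the observation that the first step of any non-empty $P'$ or $P''$ is forced to be an up step guarantees that no ``preceding step'' information leaks across the pieces of the decomposition.

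Note, however, that the paper does not actually give a proof of this theorem: it is stated with a reference to Roblet and Viennot and left without argument. The only path-decomposition proof written out in the paper is for Theorem~\ref{chemins3} on Schr\"oder paths, which uses exactly the same cut-at-first-return idea you employ here (decompose into indecomposable pieces, peel off the outer up/down pair, and recurse with shifted weights). So your proof is in the same spirit as what the paper does for the neighbouring result, and it fills in what the paper leaves to the citation.
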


Also, note that the straightforward analog of Theorem~\ref{chemins} in the case of T-fractions gives the 
following:

\begin{theo}  \label{chemins3}
Let $S_{2n}$ be the weighted generating function of Schröder paths of length $2n$, where the weights are:
\begin{itemize}
 \item $c_i$ for a step $(1,-1)$ starting at height $i$, 
 \item $d_i$ for a step $(2,0)$ at height $i$.
\end{itemize}
Then we have:
\begin{equation*}
\sum_{ n \geq 0} S_{2n} z^n =
\cfrac{1}{
    1 - d_0 z - \cfrac{ c_1 z}{
        1 - d_1 z - \cfrac{ c_2 z}{
            1 - d_2 z - \cfrac{ c_3 z}{\ddots}
        }
    }
}.
\end{equation*}
\end{theo}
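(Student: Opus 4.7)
The plan is to mimic the standard proof of Theorem~\ref{chemins} for Motzkin paths, using a first-return decomposition and an auxiliary family of generating functions indexed by the base height. For each integer $i \geq 0$, let $F_i(z)$ denote the weighted generating function of Schröder paths that start and end at height $i$, remain at height $\geq i$ throughout, and whose steps carry the same weights as in the statement (so a down step starting at height $j$ has weight $c_j$ and a flat step at height $j$ has weight $d_j$). The variable $z$ marks the length parameter $n$, i.e. a path of total length $2n$ contributes $z^n$; in particular a flat step $(2,0)$ contributes a single $z$ and a matched pair consisting of an up step $(1,1)$ and a down step $(1,-1)$ also contributes a single $z$.

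Next I would perform a first-return decomposition at height $i$. A nonempty path contributing to $F_i(z)$ falls into one of two cases: either it begins with a flat step $(2,0)$ at height $i$, weight $d_i\, z$, followed by an arbitrary element of $F_i(z)$; or it begins with an up step $(1,1)$, stays strictly above height $i$ for a segment that is in bijection with an arbitrary element of $F_{i+1}(z)$, and then returns to height $i$ via a down step $(1,-1)$ of weight $c_{i+1}$, the pair of up and down steps contributing an extra factor $z$. After this first return the path is again an arbitrary element of $F_i(z)$. This yields the functional equation
\begin{equation*}
 F_i(z) = 1 + d_i\, z\, F_i(z) + c_{i+1}\, z\, F_{i+1}(z)\, F_i(z),
\end{equation*}
which rearranges to
\begin{equation*}
 F_i(z) = \cfrac{1}{1 - d_i\, z - c_{i+1}\, z\, F_{i+1}(z)}.
\end{equation*}

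Iterating this identity starting from $i=0$ produces the announced continued fraction, and since $F_0(z) = \sum_{n \geq 0} S_{2n}\, z^n$ by definition, the theorem follows. There is no serious obstacle here: the only delicate point is keeping track of $z$-exponents in the decomposition, but this is transparent because each of the two candidate ``elementary blocks'' at height $i$ (a flat step, or an up step followed by a full excursion above height $i{+}1$ and its matching down step) has total length $2$ and contributes exactly one power of $z$. A fully rigorous argument proceeds, as for Theorem~\ref{chemins}, by truncating the recursion at a finite height $N$ and observing that for paths of length $\leq 2N$ the contribution of $F_{N+1}(z)$ is irrelevant, so the truncated continued fraction agrees with $F_0(z)$ up to order $z^N$; letting $N \to \infty$ gives the identity as formal power series.
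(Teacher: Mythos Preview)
Your proof is correct and is essentially the same argument as the paper's: both use the first-return decomposition of a Schr\"oder path into indecomposable blocks (a single flat step at the base height, or an up step followed by an excursion one level higher and a matching down step), obtain the functional equation $F_i(z)=\bigl(1-d_i z - c_{i+1} z\,F_{i+1}(z)\bigr)^{-1}$, and iterate. The only cosmetic difference is that the paper phrases the shift as ``Schr\"oder paths with respect to a shifted origin'' rather than introducing the explicit family $F_i$, and does not spell out the truncation argument for convergence of the continued fraction.
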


\begin{proof}
We can decompose a Schröder path as a a concatenation of smaller Schröder paths, by cutting every time the path reaches height $0$.
This shows $\sum D_{2n} z^n = (1-X)^{-1}$ where $X$ is the generating function of indecomposable paths.

If an indecomposable path is not a single step $(2,0)$ at height $0$, it begins with a step $(1,1)$, ends with a step $(1,-1)$,
and removing these two steps gives another Schröder path (of length $2$ less) with respect to a shifted origin. 
Since a step $(2,0)$ at height $0$ has weight $d_0$ and an ending step $(1,-1)$ has weight $c_1$, we have $X= d_0z + c_1zY$
where $Y$ is the weighted generating function of Schröder paths where the weights are appropriately shifted.

So we have
\begin{equation*}
\sum_{ n \geq 0} D_{2n} z^n =
\frac{1}{ 1 - d_0z -c_1 z Y}
\end{equation*}
and iterating the argument gives the continued fraction.
\end{proof}

\section{Specific notation}

Through the whole paper, we use two four-parameter families of integers defined by: 
\begin{align}
  \label{diffA} A^{n,k}_{i,j} &= \binom n{k+i}\binom n{k-j}-\binom n{k+i+1}\binom n{k-j-1}, \\[3mm]
  \label{diffB} B^{n,k}_{i,j} &= \binom{n+j-1}{k-1} \binom{n-i-1}{k-1} - \binom{n+j}{k-1} \binom{n-i-2}{k-1}.
\end{align}
By factorizing these expression, we get the alternative formulas:
\begin{align}
  \label{factA} A^{n,k}_{i,j} &= \frac{i+j+1}{n+1}  \binom{n+1}{k-j} \binom{n+1}{k+i+1}, \\[3mm]
  \label{factB} B^{n,k}_{i,j} &= \frac{i+j+1}{n+j}  \binom{n+j}{k-1} \binom{n-i-2}{k-2}.
\end{align}
The four parameters $n,k,i,j$ are non-negative integers such that $0\leq j \leq i \leq n-k$,
and we use the convention that $\binom{-2}{-2} = \binom{-1}{-1} = 1$, and $\binom{u}{v}=0$
if $v<0$ and $u>v$.

It is worth observing that we have:
\[
  A^{n,k}_{i,j} = A^{n,k-1}_{i+1,j-1}, \qquad B^{n,k}_{i,j} = B^{n+1,k}_{i+1,j-1},
\]
so that we could use three-parameter families of integers. But curiously, this observation does not seem
to be of any help in the present work. 

Also, we will use the {\it Narayana numbers} (see \href{https://oeis.org/A001263}{A001263} in \cite{sloane}), defined by:
\begin{align*}
   N_{n,k} = \frac{1}{n} \binom{n}{k-1} \binom{n}{k}
\end{align*}
if $1\leq k \leq n$. We slightly extend this definition by the convention $N_{0,0}=1$, and $N_{n,0}=0$ if $n\geq 1$.

\section{\texorpdfstring{$q$-Stirling numbers of the second kind}{q-Stirling numbers of the second kind}}

This section mostly presents results taken from \cite{rubeyjosuatverges}.

\begin{defi} 
 Let $\pi$ be a set partition of $\{1,\dots,n\}$. We denote $\cro(\pi)$ the number of {\it crossings} of $\pi$, 
 i.e.~pairs $((i,k),(j,l))$ with $i<j<k<l$ such that $i,k$ (respectively $j,l$) are two neighbour elements of a block of $\pi$.
\end{defi}

We represent a set partition by drawing arcs between each pair of neighbour elements in every block. Then $\cro(\pi)$
is seen to be the number of intersection points between the arcs. For example in the left part of Figure~\ref{crossings}, the set partition
$156|24|38|79{\rm A}$ has three crossings: $((2,4),(3,8))$, $((1,5),(3,8))$, and $((3,8),(7,9))$.       
       
\begin{defi}      
 Let $\delta_n$ denote the staircase Young diagram, with row lengths $n,n-1,\dots,1$ from bottom to top (using French notation).
 A {\it rook placement} is a (partial) filling of $\delta_n$ with some dots (also called rooks), such that there is at most 
 one dot in each row and in each column. Let $\inv(R)$ denote the number of {\it inversions} in a rook placement $R$, 
 i.e.~cells having a dot to its left in the same row and another below in the same column. 
\end{defi}

In a rook placement, we represent inversions by a cross. In the right part of Figure~\ref{crossings}, 
we have a rook placement with three inversions.

\begin{figure}[h!tp]
 \begin{tikzpicture}[scale=0.6]
   \tikzstyle{ver} = [circle, draw, fill, inner sep=0.5mm]
   \tikzstyle{edg} = [line width=0.6mm]
   \draw[color=white] (1,-4) rectangle (10,6);
   \node[ver] at (1,0) {};
   \node[ver] at (2,0) {};
   \node[ver] at (3,0) {};
   \node[ver] at (4,0) {};
   \node[ver] at (5,0) {};
   \node[ver] at (6,0) {};
   \node[ver] at (7,0) {};
   \node[ver] at (8,0) {};
   \node[ver] at (9,0) {};
   \node[ver] at (10,0) {};
   \node      at (1,-0.6) {1};
   \node      at (2,-0.6) {2};
   \node      at (3,-0.6) {3};
   \node      at (4,-0.6) {4};
   \node      at (5,-0.6) {5};
   \node      at (6,-0.6) {6};
   \node      at (7,-0.6) {7}; 
   \node      at (8,-0.6) {8}; 
   \node      at (9,-0.6) {9}; 
   \node      at (10,-0.6) {A}; 
   \draw[edg] (1,0) to[bend left=60] (5,0);
   \draw[edg] (5,0) to[bend left=60] (6,0);
   \draw[edg] (2,0) to[bend left=60] (4,0);
   \draw[edg] (3,0) to[bend left=60] (8,0);
   \draw[edg] (7,0) to[bend left=60] (9,0);
   \draw[edg] (9,0) to[bend left=60] (10,0);
 \end{tikzpicture}
 \hspace{2cm}
 \begin{tikzpicture}[scale=0.6]
  \tikzstyle{rec} = [line width=0.6mm]
  \tikzstyle{ver} = [circle, draw, fill, inner sep=0.5mm]
  \draw[rec]  (0,0) rectangle (9,1);
  \draw[rec] (0,0) rectangle (8,2);
  \draw[rec] (0,0) rectangle (7,3);
  \draw[rec] (0,0) rectangle (6,4);
  \draw[rec] (0,0) rectangle (5,5);
  \draw[rec] (0,0) rectangle (4,6);
  \draw[rec] (0,0) rectangle (3,7);
  \draw[rec] (0,0) rectangle (2,8);
  \draw[rec] (0,0) rectangle (1,9);
  \node      at (0.5,9.5) {1};
  \node      at (1.5,8.5) {2};
  \node      at (2.5,7.5) {3};
  \node      at (3.5,6.5) {4};
  \node      at (4.5,5.5) {5};
  \node      at (5.5,4.5) {6};
  \node      at (6.5,3.5) {7};
  \node      at (7.5,2.5) {8};
  \node      at (8.5,1.5) {9};
  \node      at (9.5,0.5) {A};
  \node[ver] at (0.5,5.5) {};
  \node[ver] at (1.5,6.5) {};
  \node[ver] at (2.5,2.5) {};
  \node[ver] at (4.5,4.5) {};
  \node[ver] at (6.5,1.5) {};
  \node[ver] at (8.5,0.5) {};
  \node      at (2.5,6.5) {$\times$};
  \node      at (2.5,5.5) {$\times$};
  \node      at (6.5,2.5) {$\times$};
 \end{tikzpicture}
 \caption{A set partition, and a rook placement. \label{crossings}}
\end{figure}
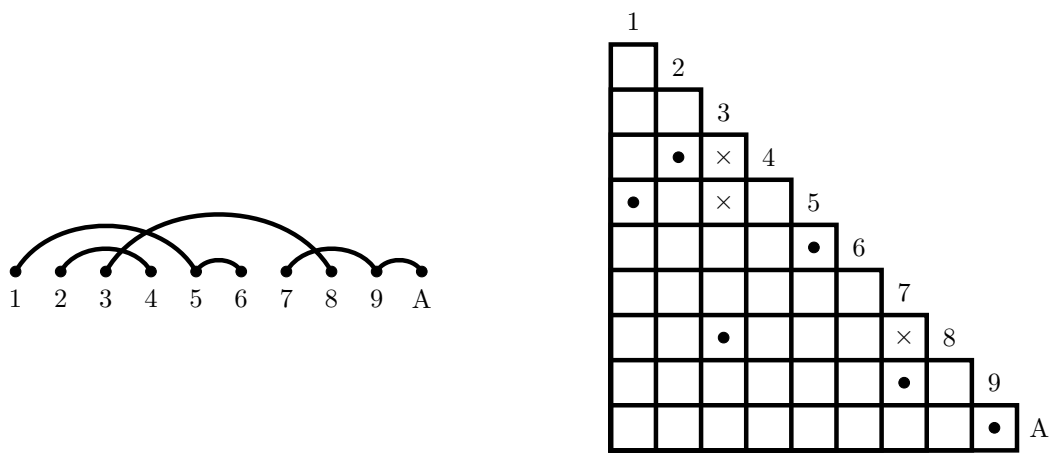

There is a simple bijection between set partitions of $\{1,\dots,n\}$ with $k$ blocks and rook placements
in $\delta_{n-1}$ with $n-k$ rooks. We first label the outer corners of $\delta_{n-1}$ with integers
from $1$ to $n$, from the top left corner to the bottom right one. Then, for each arc $(i,j)$ in the 
set partition, we put a cross at the intersection of the column having $i$ above it and the row having $j$ 
to its right. For example the set partition in the left part of Figure~\ref{crossings} corresponds to the rook placement 
to its right. It is easily seen that the crossings of a set partition are in bijection with the inversions of the 
corresponding rook placement.

It is well known that $S_2(n,k)$ is the number of set partitions of $\{1,\dots,n\}$ with $k$ blocks,
and the $q$-analog is given by:

\begin{defi} \label{defqst2}
If $0\leq k \leq n$, let $S_2[n,k] = \sum_{\pi} q^{\cro(\pi)}$ where the sum is over set partitions $\pi$
of $\{1,\dots,n\}$ with $k$ blocks. Alternatively, $S_2[n,k] = \sum_{R} q^{\inv(R)}$ where the sum
is over rook placements in $\delta_{n-1}$ with $n-k$ rooks.
\end{defi}

In \cite{rubeyjosuatverges}, we studied the notion of crossings in several combinatorial objects, 
from the point of view of orthogonal polynomials and their moments. 
In particular, by giving a bijection between set partitions and weighted Motzkin paths
and using Theorem~\ref{chemins}, we got:

\begin{theo}[cf.~\cite{rubeyjosuatverges}] \label{qfrac2}
\begin{equation*}
\sum_{0\leq k \leq n} S_2[n,k] x^k z^n =
\cfrac{1}{
    1 - xz - \cfrac{[1]_q x z^2}{
        1 - (x+[1]_q)z - \cfrac{[2]_qxz^2}{
            1 - (x+[2]_q)z - \cfrac{[3]_qxz^2}{\ddots}
        }
    }
}.
\end{equation*}
\end{theo}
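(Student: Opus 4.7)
The plan is to apply Theorem~\ref{chemins} after constructing a bijective encoding of set partitions as weighted Motzkin paths. To each set partition $\pi$ of $\{1,\dots,n\}$ I would associate a Motzkin path $P(\pi)$ of length $n$ whose $i$-th step is: up if $i$ is the minimum of a non-singleton block (an \emph{opener}), down if $i$ is the maximum of a non-singleton block (a \emph{closer}), and level if $i$ is a transient element of its block (neither minimum nor maximum) or a singleton. By construction, the height of $P(\pi)$ just before position $i$ equals the number of blocks already opened but not yet closed at that moment.

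I would then track the local contribution of each position to $\cro(\pi)$ and to the block count. Label the currently open arcs at height $h$ by their rank $1,\dots,h$ according to the order of their left endpoints. At a closer at height $h$, the arc that ends here has some rank $j$, and it crosses precisely the $h-j$ arcs of higher rank (whose left endpoints lie strictly between the left endpoint of the closing arc and the current position, and whose right endpoints lie strictly to the right); this contributes $q^{h-j}$, which summed over $j\in\{1,\dots,h\}$ yields $[h]_q$. A transient at height $h$ similarly extends one of the $h$ open arcs and contributes $[h]_q$ for the same reason, while openers and singletons create no new crossings. To track the block count, assign an extra factor $x$ to each closer and to each singleton, so that every block of $\pi$ contributes exactly one $x$, giving the factor $x^k$. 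This yields weights $a_h = x\,[h]_q$ on a down step starting at height $h$, $b_h = x + [h]_q$ on a level step at height $h$ (with the $x$ coming from singletons and the $[h]_q$ from transients), and weight $1$ on every up step. Plugging these into Theorem~\ref{chemins} produces the claimed continued fraction.

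The main obstacle is to verify that this dynamic labeling of the open arcs really does account for each crossing exactly once: when an arc is closed or extended, the labels of the remaining open arcs must be re-indexed consistently, and a newly created arc (at an up step or at a transient level step) must be inserted in the correct position of the stack. The cleanest way is to show that the map sending $\pi$ to the pair (Motzkin path, sequence of rank-choices made at each closer and transient) is a bijection onto the set of all admissibly-labeled paths, and then to check the crossing count by a local argument comparing consecutive positions. This verification is precisely the bijection developed in \cite{rubeyjosuatverges}, which I would quote rather than reconstruct here.
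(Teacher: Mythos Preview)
Your proposal is correct and follows exactly the route the paper indicates: the paper does not give a self-contained proof of this theorem but simply states that it follows by ``giving a bijection between set partitions and weighted Motzkin paths and using Theorem~\ref{chemins},'' citing \cite{rubeyjosuatverges} for the bijection. The opener/closer/transient/singleton encoding you describe, together with the rank-labeling of open arcs to track crossings, is precisely that bijection, and your weight assignment $a_h = x[h]_q$, $b_h = x+[h]_q$ matches the continued fraction.
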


And it is possible to extract a formula for each coefficient from the continued fraction:

\begin{theo}[cf.~\cite{rubeyjosuatverges}] \label{thformulaqst2}
\begin{equation} \label{formulaqst2}
  (1-q)^{n-k} S_2[n,k] =  \sum\limits_{j=0}^{k} \sum\limits_{i=j}^{n-k} (-1)^i 
  A^{n,k}_{i,j} q^{\tbinom{j+1}2} \qbin{i}{j}.
\end{equation}
\end{theo}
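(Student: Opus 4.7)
The plan is to extract a closed form for the coefficients of the $J$-fraction in Theorem~\ref{qfrac2}. By the Flajolet--Viennot correspondence (Theorem~\ref{chemins}), $\sum_k S_2[n,k]x^k$ equals the weighted generating function of Motzkin paths of length $n$ with down-step weights $[i]_q x$ and level-step weights $x+[i]_q$. Splitting each level step into an ``$x$-colored'' variant of weight $x$ and a ``$q$-colored'' variant of weight $[i]_q$, the coefficient of $x^k$ becomes the weighted sum over such refined paths in which exactly $n-k$ steps are of ``$q$-type'' (i.e.\ down steps or $q$-colored level steps), each contributing a factor $[m]_q$ at its height $m$.

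Next, multiplying through by $(1-q)^{n-k}$ and using $(1-q)[m]_q = 1-q^m$ converts each $q$-type weight into $1-q^m$. Expanding the product via $(1-q^m) = 1 + (-q^m)$ yields a signed refinement, which I re-index by a pair $(i,j)$ where $i$ is the number of $q$-type steps at which the term $-q^m$ is selected, and $j$ is the number of those steps that are down steps. The sign $(-1)^i$ appears immediately, while the factor $q^{\binom{j+1}{2}}\qbin{i}{j}$ is obtained from summing $q^{m_1+\cdots+m_j}$ over the chosen down-step heights via the standard identity $\sum_{1\le m_1<\cdots<m_j\le i} q^{m_1+\cdots+m_j} = q^{\binom{j+1}{2}}\qbin{i}{j}$.

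It remains to show that the residual combinatorial count --- the number of Motzkin skeletons compatible with the chosen marked-step data --- is exactly $A^{n,k}_{i,j}$ as defined in \eqref{diffA}. The difference-of-products form is the fingerprint of either a reflection-principle argument or a Lindström--Gessel--Viennot computation on a pair of lattice paths with prescribed endpoints, enforcing nonnegativity of the underlying Motzkin path across the marked positions.

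The main obstacle is precisely this last step: showing that the residual count of skeletons with the marked data imposed separates cleanly into the advertised binomial expression, and that the non-crossing constraint correctly produces the cancellation $\binom{n}{k+i}\binom{n}{k-j}-\binom{n}{k+i+1}\binom{n}{k-j-1}$. If the combinatorial bookkeeping becomes too intricate, a purely algebraic backup is to substitute the conjectured right-hand side into the three-term recurrence underlying the continued fraction and verify the resulting identity using the contiguity relation \eqref{eqcontig} together with the Pfaff--Saalschütz summation \eqref{pfaffsaalschutz}.
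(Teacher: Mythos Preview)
The paper does not prove this theorem at all: it is quoted verbatim from \cite{rubeyjosuatverges}, so there is no proof here to compare against. What the present paper \emph{does} prove is the companion formula for $S_1[n,k]$ (Theorems~\ref{thformulaqst1} and~\ref{thformulaqst1bis}), and that argument is the natural benchmark for your plan.

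Your outline has the right opening moves (Motzkin-path interpretation, splitting level steps, multiplying by $(1-q)^{n-k}$ to turn each $[m]_q$ into $1-q^m$, and then expanding), but the step where you produce $q^{\binom{j+1}{2}}\tqbin{i}{j}$ is not sound as written. You say the factor comes from ``summing $q^{m_1+\cdots+m_j}$ over the chosen down-step heights'', but among the $i$ marked steps there are also $i-j$ marked \emph{level} steps, each contributing its own $q^m$; those are simply dropped in your accounting. Moreover, even for the down steps, the heights $m_\ell$ are determined by the path, not freely ranging over subsets of $\{1,\dots,i\}$, so the subset identity you invoke does not apply directly. In short, the decoupling of the $q$-part from the binomial ``skeleton'' count is exactly the hard step, and you have asserted it rather than proved it.

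The way this decoupling is actually achieved (both in \cite{rubeyjosuatverges} and in this paper's proof of Theorem~\ref{thformulaqst1bis}) is more structured: one does not just mark steps and sum, but passes through an intermediate object whose generating function is known in closed form. In the $S_1$ case here, after the analogous manipulations one lands on the quantities $\mu_{n,k}$ of Definition~\ref{defmu}, which are moments of continuous big $q$-Hermite polynomials; their explicit formula supplies the $q^{\binom{j+1}{2}}\tqbin{i}{j}$ factor, and a separate binomial computation (a Vandermonde/\eqref{gauss}-type evaluation) yields the $A$ or $B$ coefficients. Your proposal needs an analogous device in place of the hand-wave about down-step heights. The backup plan (plug the right-hand side into the three-term recurrence and verify) is viable but is a different proof entirely, and you would still have to carry it out.
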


\begin{rema}
Although it is not needed for our purpose, let us mention that we found an alternative formula in the course of this work.
If $n>0$, we have:
\begin{equation} \label{formulaqst2bis}
  (1-q)^{n-k} S_2[n,k] =  \sum\limits_{j=0}^{k-1} \sum\limits_{i=j}^{n-k} (-1)^i 
  A^{n-1,k-1}_{i,j} q^{ i + \tbinom{j}2} \qbin{i}{j}.
\end{equation}
See Remark~\ref{altformulas} in the next section.
\end{rema}

\begin{rema}
Note that partitions with no crossing are the {\it noncrossing partitions} in the sense of Kreweras~\cite{kreweras}.
Since the number of noncrossing partitions of $\{1,\dots,n\}$ in $k$ blocks is $N_{n,k}$ (see Corollaire~4.1 in loc.~cit.), 
we have:
\begin{equation} \label{qst2q0}
   S_2[n,k] |_{q=0} = N_{n,k} .
\end{equation}
\end{rema}

\section{\texorpdfstring{$q$-Stirling numbers of the first kind}{q-Stirling numbers of the first kind}}

\begin{defi}
Let $\mathfrak{S}_n$ denote the group of permutations of $\{1,\dots,n\}$. 
A permutation is represented by its graph, a $n\times n$ grid where we put a dot at each cell with coordinates $(i,\sigma(i))$
for $1\leq i \leq n$.
Let $\sigma\in\mathfrak{S}_n$, then a {\it right-to-left maxima} of $\sigma$ is an integer $i\in\{1,\dots,n\}$ such that $\sigma(i)>\sigma(j)$ if $i<j\leq n$.
Let $\rlm(\sigma)$ denote the number of right-to-left maxima of $\sigma$.
The {\it bounding path} of $\sigma\in\mathfrak{S}_n$ is the path characterized by the following properties:
\begin{itemize}
 \item It starts at $(0,n)$ and ends at $(n,0)$. 
 \item It has $2n$ unit steps going East or South, in coordinates: $(1,0)$ or $(0,-1)$.
 \item All cells $(i,\sigma(i))$ are below it.
 \item It is as low as possible among paths satisfying the previous three properties. 
\end{itemize}
\end{defi}

In the left part of Figure~\ref{graphperm}, we have the graph of the permutation $869237514$ and its bounding path as a dashed line.
After the appropriate rotation so that the steps $(1,0)$ and $(0,-1)$ respectively become $(1,1)$ and $(1,-1)$, 
it is easily seen that the bounding path becomes a Dyck path of length $2n$, see the right of Figure~\ref{graphperm}.
Moreover, each right-to-left maxima in the permutation corresponds to a {\it peak} in its bounding path, 
i.e.~a step $(1,1)$ immediately followed by a step $(1,-1)$ to its right.

Let $\sigma\in\mathfrak{S}_n$. We recall that an {\it inversion} of $\sigma$ is a pair $(i,j)$ such that $1\leq i<j\leq n$ and 
$\sigma(i)>\sigma(j)$.

\begin{defi}
 A {\it special inversion} of a permutation $\sigma$ is an inversion $(i,j)$ such that there exists $k$
 satisfying $j<k\leq n$ and $\sigma(i)<\sigma(k)$. Let $\inv'(\sigma)$ denote the number of special inversions of $\sigma$.
\end{defi}

The inversion $(i,j)$ can be identified with the cell $(j,\sigma(i))$ (as in the previous section).
Then it is special if and only if this cell $(j,\sigma(i))$ is below the bounding path.
In this case, we represent this special inversion by putting a $\times$ in the cell $(j,\sigma(i))$. See the left part of 
Figure~\ref{graphperm} where there are 5 special inversions.

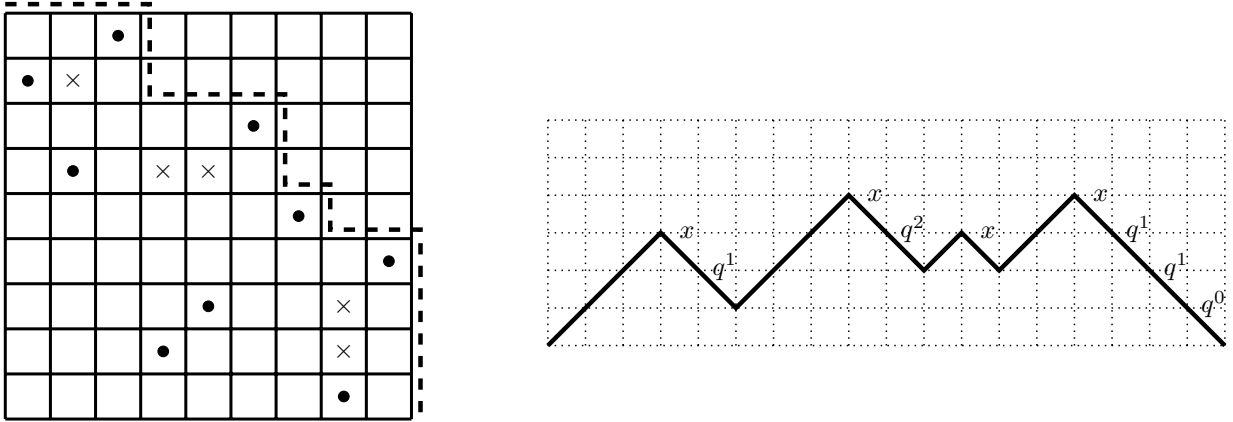
\begin{figure}
 \begin{tikzpicture}[scale=0.6]
   \tikzstyle{ver} = [circle, draw, fill, inner sep=0.5mm]
   \draw[line width=0.4mm] (0, 0) grid (9, 9);
   \node[ver] at (0.5,7.5) {};
   \node[ver] at (1.5,5.5) {};
   \node[ver] at (2.5,8.5) {};
   \node[ver] at (3.5,1.5) {};
   \node[ver] at (4.5,2.5) {};
   \node[ver] at (5.5,6.5) {};
   \node[ver] at (6.5,4.5) {};
   \node[ver] at (7.5,0.5) {};
   \node[ver] at (8.5,3.5) {};
   \node      at (1.5,7.5) {$\times$};
   \node      at (3.5,5.5) {$\times$};
   \node      at (4.5,5.5) {$\times$};
   \node      at (7.5,2.5) {$\times$};
   \node      at (7.5,1.5) {$\times$};
   \draw[line width = 0.6mm, dashed, dash pattern=on 1.5mm off 1.5mm] (0,9.2) -- (3.2,9.2) -- (3.2,7.2) -- (6.2,7.2) -- (6.2,5.2) -- (7.2,5.2) -- (7.2,4.2) 
         -- (9.2,4.2) -- (9.2,0);
 \end{tikzpicture}
 \hspace{1.4cm}
 \begin{tikzpicture}[scale=0.5]
   \draw[color=white,dashed] (0,-2) rectangle (18,8);
   \draw[line width=0.2mm,dotted] (0, 0) grid (18,6);
   \draw[line width = 0.6mm] (0,0) -- (3,3) -- (5,1) -- (8,4) -- (10,2) -- (11,3) -- (12,2) -- (14,4) -- (18,0);
   \node at (3.7,3) {$x$};
   \node at (4.7,2.1) {$q^1$};
   \node at (8.7,4) {$x$};
   \node at (9.7,3.1) {$q^2$};
   \node at (11.7,3) {$x$};
   \node at (14.7,4) {$x$};
   \node at (15.7,3.1) {$q^1$};
   \node at (16.7,2.1) {$q^1$};
   \node at (17.7,1.1) {$q^0$};
 \end{tikzpicture}
 \caption{Bounding path and special inversions of the permutation $869237514$ (left) 
          and a weighted Dyck path (right).\label{graphperm}}
\end{figure}

It is well known that $S_1(n,k)$ is the number of permutations $\sigma\in\mathfrak{S}_n$
having $k$ right-to-left maxima. The $q$-analog is given by:

\begin{defi} \label{defqst1}
Let $S_1[n,k] = \sum_{\sigma} q^{\inv'(\sigma)}$ where we sum over permutations $\sigma\in\mathfrak{S}_n$
having $k$ right-to-left maxima.
\end{defi}

There is also a continued fraction expansion for the generating function of $q$-Stirling numbers of the first kind,
but this time it is a T-fraction. The theorem below is a variant of the results in \cite[Section~3]{robletviennot}.

\begin{theo} \label{qfrac1}
\begin{equation} \label{eqqfrac1}
\sum_{0\leq k \leq n} S_1[n,k] x^k z^n =
\cfrac{1}{
    1 - (x-[1]_q)z - \cfrac{[1]_q z}{
        1 - (x-[2]_q)z - \cfrac{[2]_qz}{
            1 - (x-[3]_q)z - \cfrac{[3]_qz}{\ddots}
        }
    }
}.
\end{equation}
\end{theo}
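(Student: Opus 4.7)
The plan is to derive~\eqref{eqqfrac1} from Theorem~\ref{chemins2} by way of a weight-preserving bijection between $\mathfrak{S}_n$, weighted by $x^{\rlm(\sigma)} q^{\inv'(\sigma)}$, and Dyck paths of length $2n$ carrying weight $b_i = x$ on every peak down-step and $c_i = [i]_q$ on every non-peak down-step at starting height $i$. Once such a bijection is in place, Theorem~\ref{chemins2} applied with these weights produces~\eqref{eqqfrac1} directly, since $b_i - c_i = x - [i]_q$ and $c_i = [i]_q$ match the coefficients appearing in the continued fraction.

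The map $\sigma \mapsto D(\sigma)$ is obtained by rotating the bounding path of $\sigma$. Writing $H_j := \max\{\sigma(i) : i \ge j\}$ for the height of the bounding path during the east step crossing column $j$, the Dyck path $D(\sigma)$ has a peak at this up-step iff $H_j > H_{j+1}$, equivalently iff $j$ is a right-to-left maximum of $\sigma$; hence $D(\sigma)$ has exactly $\rlm(\sigma)$ peaks, accounting for the $x$-weight.

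For the colors on non-peak down-steps, I would match each non-peak down-step of $D(\sigma)$ at $(j, r) \to (j, r-1)$ in bounding-path coordinates with the row $r$, which is necessarily not an rlm value. Set $J_r := \max\{j : H_j \ge r\}$; the starting height of this step in $D(\sigma)$ equals $i_r := J_r + r - n$, and a short count shows $|S_r| = i_r$ where $S_r := \{j \le J_r : \sigma(j) \le r\}$. Define the color
\[
  c_r := \bigl|\{j \in S_r : j > \sigma^{-1}(r)\}\bigr| \in \{0, 1, \ldots, i_r - 1\}.
\]
The identity $\sum_{r \text{ non-rlm}} c_r = \inv'(\sigma)$ then holds because the special inversions $(i, j)$ of $\sigma$ with $\sigma(i) = r$ correspond bijectively to the cells $(j, r)$ with $\sigma^{-1}(r) < j < J_r$ and $\sigma(j) < r$, which are precisely the elements of $S_r$ lying strictly to the right of $\sigma^{-1}(r)$. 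Bijectivity of $\sigma \mapsto (D(\sigma), (c_r)_r)$ follows from the explicit inverse that processes rows top-down: rlm values are placed at their forced columns, and each non-rlm row $r$ is placed at the $(c_r + 1)$-th largest of the currently available columns $\le J_r$, a set whose size is inductively $i_r$.

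The main obstacle is the identity $\sum c_r = \inv'(\sigma)$ together with the verification that each $c_r$ ranges independently over $\{0, \ldots, i_r - 1\}$. This is essentially a Roblet--Viennot style T-fraction argument tailored to the statistic $\inv'$ on permutations; once this combinatorial step is carried out, the passage from the resulting generating function to~\eqref{eqqfrac1} via Theorem~\ref{chemins2} is immediate.
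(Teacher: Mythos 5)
Your proposal is correct and follows essentially the same route as the paper: both use the bounding path of $\sigma$ as the underlying Dyck path, identify peaks with right-to-left maxima, color each non-peak down-step by the number of special inversions in the corresponding row (your $c_r$ is exactly the paper's exponent $j$, and your count $|S_r|=i_r$ plays the role of the paper's bound $j\leq h-1$), and then invoke Theorem~\ref{chemins2} with $b_i=x$, $c_i=[i]_q$. The paper likewise only sketches the inverse map, so your level of detail matches its proof.
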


\begin{proof}
Note that 
\[
  \sum_{k=0}^n S_1[n,k] x^k = \sum_{\sigma \in \mathfrak{S}_n} q^{\inv'(\sigma)} x^{\rlm(\sigma)}.
\]
To get the generating function of this quantity, the idea is to build a bijection $\phi$ between $\mathfrak{S}_n$ and the 
set $\mathcal{P}_n$ of weighted Dyck paths of length $2n$ such that:
\begin{itemize}
 \item a step $(1,-1)$ following a step $(1,1)$ has weight $x$,
 \item a step $(1,-1)$ following a step $(1,-1)$ and starting at height $i$ has a weight $q^j$ where $0\leq j \leq i-1$.
\end{itemize}
Moreover we require that the weight of $\phi(\sigma)$ is $ q^{\inv'(\sigma)} x^{\rlm(\sigma)} $.
The result then follows using Theorem~\ref{chemins2}.
This bijection is given in \cite[Section~3]{robletviennot}, but let us define it (for the convenience of the reader and
because we have different conventions).

Let $\sigma\in\mathfrak{S}_n$. First, the underlying Dyck path of $\phi(\sigma)$ is its bounding path. 
Then, the weight of the $i$th step $(1,-1)$ in the Dyck path is:
\begin{itemize}
 \item $x$ if this step follows a step $(1,1)$,
 \item $q^j$ where $j$ is the number of $\times$ in the $i$th row (from top to bottom) in the graph of $\sigma$, otherwise. 
\end{itemize}
See Figure~\ref{graphperm} for an example, where the weighted Dyck path on the right is the image of the permutation on the left. 
It is clear that the weight of $\phi(\sigma)$ is $ q^{\inv'(\sigma)} x^{\rlm(\sigma)} $.

To prove $\phi(\sigma)\in\mathcal{P}_n$, we need to check that the weights are valid. Consider the $i$th step $(1,-1)$ in the 
Dyck path. Let $h$ be its initial height, so that there are $ h+i-1$ steps $(1,1)$ to its left. Now if we consider the 
path in the graph of the permutation (as in the left part in Figure~\ref{graphperm}), we see that in the $i$th row, the number
of cells to the right of the bounding path is $n-(h+i-1)$. But all these cells are inversions (with the convention that 
inversions are identified with cells of the permutation graph as explained above) that are not special. 
The number of inversions in the $i$th row is at most $n-i$ so the number of special inversions in this row 
is smaller than $(n-i) - (n-h-i+1) = h-1$. It follows that $\phi(\sigma)\in\mathcal{P}_n$.

Now if we have a path in $\mathcal{P}_n$, its image under the inverse bijection is obtained by building the permutation graph
from top to bottom as follows. If the $i$th step $(1,-1)$ has weight $x$, the dot in the $i$th row is placed just to the left of the 
bounding path. If its weight is $q^j$, it is placed in the $(j+1)$th cell (from right to left) among cells to the left of the bounding path
that have no dot above in the same row. We omit details about why this is well defined and why this is the inverse bijection of $\phi$.
\end{proof}

We will use the combinatorics of paths to prove the formula below:

\begin{theo} \label{thformulaqst1}
\begin{equation} \label{formulaqst1}
  (1-q)^{n-k} S_1[n,k] = \sum\limits_{j=0}^{n-k} \sum\limits_{i=j}^{n-k} (-1)^j
  B^{n,k}_{i,j} q^{\tbinom{j+1}2} \qbin{i}{j}.
\end{equation}
\end{theo}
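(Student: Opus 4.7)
My approach is to start from the T-fraction in Theorem~\ref{qfrac1} and exploit the bijection with weighted Dyck paths built in its proof. By that bijection, $\sum_k S_1[n,k] x^k$ equals the generating function of Dyck paths of length $2n$ in which each peak carries weight $x$ and each non-peak down-step at height $h$ carries weight $[h]_q$. Restricting to paths with exactly $k$ peaks and multiplying by $(1-q)^{n-k}$ — which equals the number of non-peak down-steps — converts each $[h]_q$ into $1-q^h$. Expanding the product $\prod_s (1-q^{\mathrm{ht}(s)})$ distributively over non-peak down-steps $s$ rewrites $(1-q)^{n-k} S_1[n,k]$ as a signed sum
\[
  \sum_{(D,M)} (-1)^{|M|} q^{\sum_{s \in M} \mathrm{ht}(s)},
\]
where $D$ ranges over Dyck paths of length $2n$ with $k$ peaks and $M$ is a subset of its non-peak down-steps. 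Setting $j = |M|$ already accounts for the sign $(-1)^j$ in the target identity.

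Next, I would extract the factor $q^{\binom{j+1}{2}} \qbin{i}{j}$ by grouping pairs $(D,M)$ according to an auxiliary parameter $i$ that records the available ``room'' for $M$ in $D$ — most naturally, the number of non-peak down-steps of $D$ whose height lies in a fixed range determined by the local shape of the path. For fixed shape and fixed $i$, the heights of the marked steps in $M$ form a $j$-element subset of a length-$i$ staircase, and the standard $q$-identity $\sum_{1 \le h_1 < \dots < h_j \le i} q^{h_1 + \dots + h_j} = q^{\binom{j+1}{2}} \qbin{i}{j}$ delivers exactly the desired $q$-binomial factor. This explains the internal summation structure and the range $j \le i \le n-k$.

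The remaining external coefficient must then equal $B^{n,k}_{i,j}$. Since (\ref{diffB}) presents $B^{n,k}_{i,j}$ as a difference of two products of binomials, I expect the surviving enumeration to be amenable to the Lindström-Gessel-Viennot lemma: the remaining configurations should be in bijection with pairs of lattice paths whose endpoints are encoded by $n$, $k$, $i$, $j$, and the two subtracted products in (\ref{diffB}) correspond to the diagonal and antidiagonal terms of the resulting $2 \times 2$ determinant. Translating the Dyck-path data $(D,M)$ into such a pair of lattice paths with the correct endpoint shifts — in particular recovering the numerators $n+j-1$, $n-i-1$, $n+j$, and $n-i-2$ — is the step I expect to be the main obstacle. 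Once it is in place, the factored form (\ref{factB}) follows by a short simplification, and the alternative involution-type check that all parameters range correctly will complete the identification.
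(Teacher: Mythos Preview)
Your starting point---expanding $\prod_s (1-q^{\mathrm{ht}(s)})$ over the non-peak down-steps of a Dyck path with $k$ peaks---is correct and does give the signed sum over marked pairs $(D,M)$. The gap is in the extraction of the factor $q^{\binom{j+1}{2}}\qbin{i}{j}$. Your argument invokes $\sum_{1\le h_1<\cdots<h_j\le i} q^{h_1+\cdots+h_j}=q^{\binom{j+1}{2}}\qbin{i}{j}$, which needs the marked heights to be \emph{distinct} elements of some interval $\{1,\dots,i\}$. But the heights of non-peak down-steps in a single Dyck path can repeat: already for $D=UUDDUUDD$ ($n=4$, $k=2$) the two non-peak down-steps both start at height $1$, and the contribution is $(1-q)^2=1-2q+q^2$, which is not $\sum_j(-1)^j q^{\binom{j+1}{2}}\qbin{i}{j}$ for any $i$. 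So no ``fixed shape'' grouping can produce the $q$-binomial; any valid argument here must cancel and regroup \emph{across} different Dyck paths, and you have not indicated such an involution. The final LGV step is then built on sand, and you yourself flag it as the expected obstacle without a concrete construction.

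For contrast, the paper does not attempt a direct bijective extraction. It first reproves Theorem~\ref{thformulaqst1bis} and only afterwards deduces Theorem~\ref{thformulaqst1} from it by a $q$-Pascal rewriting together with the four-term relation~\eqref{rel4B}. For Theorem~\ref{thformulaqst1bis} the route is: rewrite the T-fraction as a Schr\"oder-path generating function (Lemma~\ref{schropath}); split the level-step weight $x-1+q^{h+1}$ into two colours and strip the $(x-1)$-steps, reducing to weighted Motzkin paths whose generating function $\mu_{n,k}$ is a moment of the continuous Big $q$-Hermite polynomials; import the known closed form for $\mu_{n,k}$, which is where the factor $q^{\binom{u+1}{2}}\qbin{k+u}{u}$ actually enters; and finally extract the coefficient of $x^k$ and evaluate the remaining \emph{ordinary} hypergeometric sum by~\eqref{gauss}, obtaining the $B^{n+1,k+1}$ coefficients. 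Thus the $q$-binomial comes from an orthogonal-polynomial moment formula rather than a staircase of step heights, and the $B$-coefficient from a hypergeometric summation rather than LGV.
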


Alternatively:

\begin{theo} \label{thformulaqst1bis}
\begin{equation} \label{formulaqst1bis}
(1-q)^{n-k}S_1[n,k] =
\sum\limits_{j=0}^{n-k} \sum\limits_{i=j}^{n-k} (-1)^j
B^{n+1,k+1}_{i,j}
 q^{i + \tbinom{j}2} \qbin{i}{j}.
\end{equation}
\end{theo}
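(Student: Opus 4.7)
Since Theorem~\ref{thformulaqst1} already provides one closed expression for $(1-q)^{n-k}S_1[n,k]$, my plan is to deduce Theorem~\ref{thformulaqst1bis} from it by showing that the two double sums coincide. Denote the right-hand side of \eqref{formulaqst1} by $T_1$ and that of \eqref{formulaqst1bis} by $T_2$. Using the identity $\binom{j+1}{2}=\binom{j}{2}+j$, factor the common weight $q^{\binom{j}{2}}\qbin{i}{j}$ out of each summand, so the claim $T_1=T_2$ becomes
\[
\sum_{j=0}^{n-k}\sum_{i=j}^{n-k}(-1)^j q^{\binom{j}{2}}\qbin{i}{j}\bigl(q^j B^{n,k}_{i,j} - q^i B^{n+1,k+1}_{i,j}\bigr)=0.
\]

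The main technical step is a $q$-Pascal manipulation. Starting from $q^i\!-q^j = -q^j(1-q)[i-j]_q$ and the basic identity $[i-j]_q\qbin{i}{j}=[j+1]_q\qbin{i}{j+1}$, I would rewrite
\[
(q^i-q^j)\qbin{i}{j} \;=\; -q^j(1-q)[j+1]_q\qbin{i}{j+1},
\]
and split the inner bracket as $q^j(B^{n,k}_{i,j}-B^{n+1,k+1}_{i,j})$ plus a term proportional to $q^j(1-q)[j+1]_q\qbin{i}{j+1}B^{n+1,k+1}_{i,j}$. Shifting $j\to j-1$ in the second piece brings it back under the common factor $(-1)^j q^{\binom{j+1}{2}}\qbin{i}{j}$, and the entire identity collapses into a purely binomial statement: a linear relation between $B^{n,k}_{i,j}$, $B^{n+1,k+1}_{i,j}$ and $B^{n+1,k+1}_{i,j-1}$. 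This residual relation I would then verify directly using the factored form \eqref{factB}, where both sides reduce to elementary ratios of ordinary binomial coefficients.

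The main obstacle is the bookkeeping in the reindexing: boundary contributions at $j=0$ and at $i=n-k$ involve the edge conventions $\binom{-2}{-2}=\binom{-1}{-1}=1$, and care is needed to ensure that the shifted sums align exactly with the unshifted ones before the $q$-dependence can be cleared. If this direct comparison proves too cumbersome, an alternative route would be to mimic the template used to prove Theorem~\ref{thformulaqst1}: re-extract $[x^k z^n]$ from the T-fraction \eqref{eqqfrac1} after rearranging the expansion differently, which should yield \eqref{formulaqst1bis} in one shot and sidestep the need to reconcile the two closed forms.
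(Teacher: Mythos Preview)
Your plan runs in the opposite logical direction from the paper. In the paper, Theorem~\ref{thformulaqst1bis} is proved \emph{first}, directly from the T-fraction: one passes to the Schr\"oder-path model (Lemma~\ref{schropath}), strips off the $(x-1)$-weighted level steps to reduce to the Motzkin-path quantities $\mu_{n,k}$, inserts the known closed form for $\mu_{n,k}$, and evaluates the remaining binomial sum via the Chu--Vandermonde/Gauss summation to obtain the $B^{n+1,k+1}_{i,j}$ coefficients. Only afterwards is Theorem~\ref{thformulaqst1} derived from \eqref{formulaqst1bis} by a $q$-binomial reshuffle. So assuming Theorem~\ref{thformulaqst1} in order to prove Theorem~\ref{thformulaqst1bis} is circular within the paper's framework; what you call the ``alternative route'' at the end of your proposal is in fact the paper's primary argument.

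On the technical side, your single $j$-shift does not suffice to kill the $q$-dependence. If you carry your computation one more line you will see a residual term carrying $q^{\binom{j}{2}}$ rather than $q^{\binom{j+1}{2}}$, so the identity does \emph{not} collapse to a three-term binomial relation among $B^{n,k}_{i,j}$, $B^{n+1,k+1}_{i,j}$, $B^{n+1,k+1}_{i,j-1}$. The paper, going the other way, applies the $q$-Pascal recursion \eqref{recqbin} \emph{twice} to write
\[
q^{\,i+\binom{j}{2}}\qbin{i}{j}
= q^{\binom{j+2}{2}}\qbin{i}{j+1}+q^{\binom{j+1}{2}}\qbin{i}{j}
  -q^{\binom{j+2}{2}}\qbin{i-1}{j+1}-q^{\binom{j+1}{2}}\qbin{i-1}{j},
\]
which shifts in both $i$ and $j$; after reindexing, the $q$-weights match termwise and what remains is the four-term relation
\[
-B^{n+1,k+1}_{i,j-1}+B^{n+1,k+1}_{i,j}+B^{n+1,k+1}_{i+1,j-1}-B^{n+1,k+1}_{i+1,j}=B^{n,k}_{i,j}
\]
(equation~\eqref{rel4B}), checked from \eqref{factB}. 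If you want to rescue your direction of implication you would need the inverse of this double shift, producing the same five-term identity, not a three-term one; but then you also need an independent proof of Theorem~\ref{thformulaqst1} to start from.
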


Our proof heavily relies on the combinatorics of Schröder and Motzkin paths.

%

\begin{lemm} \label{schropath}
The sum $ \sum_{k=0}^n (1-q)^{n-k}S_1[n,k] x^k$ is the generating function of weighted Schröder paths of length $2n$ such that:
\begin{itemize}
 \item the weight of a step $(1,-1)$ starting at height $i$ is $1-q^i$,
 \item the weight of a step $(2,0)$ at height $i$ is $x-1$ or $q^{i+1}$.
\end{itemize}
\end{lemm}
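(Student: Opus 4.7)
The plan is to derive the statement directly from the continued fraction of Theorem~\ref{qfrac1} by performing the substitution that converts ordinary generating function weights into the ``$(1-q)^{n-k}$-normalized'' weights, and then to recognize the result as the Schröder path generating function from Theorem~\ref{chemins3}.

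More precisely, I would first observe that
\[
  \sum_{k=0}^n (1-q)^{n-k} S_1[n,k] x^k z^n
  = \sum_{k=0}^n S_1[n,k] \left( \tfrac{x}{1-q} \right)^k \bigl((1-q)z\bigr)^n,
\]
so the bivariate generating function we want is obtained from the one in Theorem~\ref{qfrac1} by the substitution $x \mapsto x/(1-q)$, $z \mapsto (1-q)z$. Under this substitution, each factor $(x - [i]_q)z$ at the level of the continued fraction becomes
\[
  \Bigl( \tfrac{x}{1-q} - [i]_q \Bigr) (1-q) z = \bigl( x - (1-q)[i]_q \bigr) z = \bigl( (x-1) + q^i \bigr) z,
\]
since $(1-q)[i]_q = 1 - q^i$, while each numerator $[i]_q z$ becomes $(1-q^i) z$. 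Consequently
\[
\sum_{n,k} (1-q)^{n-k} S_1[n,k]\, x^k z^n
=
\cfrac{1}{
    1 - ((x-1)+q) z - \cfrac{(1-q) z}{
        1 - ((x-1)+q^2) z - \cfrac{(1-q^2) z}{
            1 - ((x-1)+q^3)z - \cfrac{(1-q^3)z}{\ddots}
        }
    }
}.
\]

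Next I would invoke Theorem~\ref{chemins3}: this displayed continued fraction is exactly the Schröder path generating function with $c_i = 1-q^i$ and $d_i = (x-1) + q^{i+1}$. Finally, following the ``colored step'' convention explained in the preliminaries, the weight $d_i = (x-1) + q^{i+1}$ on a $(2,0)$ step at height $i$ can be split into two alternatives, one contributing $x-1$ and the other $q^{i+1}$, which is precisely the statement of the lemma.

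There is no real obstacle: the argument is a one-line substitution plus the recognition of the resulting T-fraction, so the only thing worth being careful about is keeping track of where the factors of $(1-q)$ land after rescaling $x$ and $z$.
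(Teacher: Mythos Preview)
Your proof is correct and follows exactly the same approach as the paper: substitute $x\to x/(1-q)$ and $z\to (1-q)z$ in the continued fraction of Theorem~\ref{qfrac1}, then identify the result as the Schr\"oder path generating function via Theorem~\ref{chemins3}. You have simply spelled out the algebra of the substitution and the splitting of $d_i$ more explicitly than the paper does.
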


\begin{proof}
Making the substitution $x  \to x(1-q)^{-1} $ and $ z \to z(1-q)$ in \eqref{eqqfrac1} gives:
\begin{equation*}
\sum_{0\leq k \leq n} (1-q)^{n-k} S_1[n,k] x^k z^n =
\cfrac{1}{
    1 - (x-1+q)z - \cfrac{(1-q)z}{
        1 - (x-1+q^2)z - \cfrac{(1-q^2)z}{
            1 - (x-1+q^3)z - \cfrac{(1-q^3)z}{\ddots}
        }
    }
}.
\end{equation*}
So the result follows from Theorem~\ref{chemins3}.
\end{proof}

Note that in the previous lemma, instead of giving a weight $x-1+q^{h+1}$ to a step $(2,0)$ at height $h$,
we distinguish two possibilities. The reason will appear in the proof of the next lemma.

\begin{defi} \label{defmu}
For $k$, $n$ having the same parity and such that $0\leq k \leq n$,
let $\mu_{n,k}$ be the generating functions of weighted Motzkin paths of length $n$ such that:
\begin{itemize}
 \item there are $k$ horizontal steps,
 \item the weight of a step $(1,0)$ at height $h$  is $q^h$,
 \item the weight of a step $(1,-1)$ starting at height $h$ is $1-q^h$.
\end{itemize}
\end{defi}

\begin{lemm}
\begin{align} \label{qstsum}
  \sum_{k=0}^n (1-q)^{n-k} S_1[n,k] x^k = \sum_{i=0}^n \sum_{j=0}^{n-i} \binom{2n-i-j}{j} (x-1)^j q^i \mu_{2n-2j-i,i}.
\end{align}
\end{lemm}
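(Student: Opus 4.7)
My plan is to prove the identity bijectively, using Lemma~\ref{schropath} to reinterpret the left hand side as a generating function for colored Schröder paths, and then classifying those paths by the number of level steps of each color.

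By Lemma~\ref{schropath}, the LHS enumerates Schröder paths of length $2n$ in which a step $(1,-1)$ at height $h$ has weight $1-q^h$ and each step $(2,0)$ at height $h$ comes in one of two colors, of weight $x-1$ or $q^{h+1}$. Fix $(i,j)$ and let $\mathcal{S}_{i,j}$ denote the subfamily with exactly $j$ level steps of color $x-1$ and $i$ of color $q^{h+1}$. Since the total horizontal length must be $2n$, such a path has $n-i-j$ up steps and as many down steps, so the admissible range is $i+j\leq n$, matching the summation bounds on the right.

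The core of the proof is a bijection from $\mathcal{S}_{i,j}$ to the set of pairs consisting of a Motzkin path of length $2n-2j-i$ weighted as in Definition~\ref{defmu} (with $i$ horizontal steps) together with a multiset of $j$ insertion positions among the $2n-2j-i+1$ vertices of that path. The map deletes the $j$ level steps colored $x-1$, recording their locations as a multiset, and then shrinks each $(2,0)$ step colored $q^{h+1}$ at height $h$ to a single Motzkin level step $(1,0)$ at the same height $h$. Because level steps do not alter heights, the weights $1-q^h$ of all remaining down steps are preserved, and a shrunk level step originally contributing $q^{h+1}$ now contributes $q^h$, exactly what Definition~\ref{defmu} prescribes.

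Collecting weights yields the three factors on the right: $(x-1)^j$ from the deleted level steps, $q^i$ for the $i$ missing factors of $q$ from the shrinkages, and $\binom{2n-j-i}{j}=\binom{(2n-2j-i+1)+j-1}{j}$ by stars-and-bars for the number of multisets of $j$ positions among the $2n-2j-i+1$ vertices of the reduced path. Summing over the Motzkin paths counted by $\mu_{2n-2j-i,i}$ and over all admissible $(i,j)$ reproduces the RHS. I do not expect a serious obstacle; the only point that demands care is verifying that the deletion/insertion of the identical $(x-1)$-colored level steps is a genuine inverse. This follows from the fact that a level step can be inserted at any vertex of any Schröder or Motzkin path without altering the heights or weights of existing steps.
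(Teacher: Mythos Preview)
Your argument is correct and follows essentially the same bijection as the paper: delete the $(x-1)$-colored level steps, shrink the remaining level steps to Motzkin level steps, and observe that the weight changes by the factor $(x-1)^j q^i$. The only cosmetic difference is that the paper counts preimages by choosing which $j$ of the $2n-i-j$ steps in the final Schröder path are the $(x-1)$ steps, whereas you phrase the same count via stars-and-bars on the $2n-2j-i+1$ vertices of the Motzkin path; both yield $\binom{2n-i-j}{j}$.
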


\begin{proof}
We give a bijective proof.
Consider a weighted path $p$ as described in Lemma~\ref{schropath}.
Let $j$ be its number of steps $(2,0)$ with weight $x-1$, and $i$ its number of steps $(2,0)$ with weight $q^{h+1}$.
Then, define a weighted Motzkin path $\psi(p)$ by the following operations:
\begin{itemize}
 \item remove from $p$ each step $(2,0)$ with weight $x-1$,
 \item transform each step $(2,0)$ with weight $q^{h+1}$ into a step $(1,0)$ with weight $q^h$.
\end{itemize}
(Of course we need to connect all remaining steps to get a valid path.)
The length of $\psi(p)$ is $2n-2j-i$, it has $i$ steps $(1,0)$, and its weights are as in Definition~\ref{defmu}.
Moreover the weight of $p$ is $(x-1)^j q^i$ times that of $\psi(p)$.

To finish this bijective proof of \eqref{qstsum}, we need to check that $\binom{2n-i-j}{j}$ is the number 
of such paths $p$ as before that have the same image under $\psi$. Note that these paths $p$ have length $2n$ 
and $i+j$ steps $(2,0)$, so their total number of steps is $2n-i-j$. Hence, $\binom{2n-i-j}{j}$ is the 
number of possible locations of the steps $(2,0)$ with weight $x-1$, among all steps. 
But it is clear that we can recover $p$ from $\psi(p)$ and the location of these steps and we can conclude.
\end{proof}

\begin{lemm}
\[
  \mu_{n,k}  = \sum_{u=0}^{\frac{n-k}{2}} (-1)^u q^{\binom{u+1}{2}} \qbin{k+u}{u} \left( \binom{n}{ \frac{n-k}{2} - u } - \binom{n}{ \frac{n-k}{2} - u - 1} \right).
\]
\end{lemm}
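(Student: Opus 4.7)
The strategy is to expand the product $\prod_{D}(1-q^{h(D)})$ over down steps in the definition of $\mu_{n,k}$ and then identify the resulting signed sum with the right-hand side. Distributing the product,
\[
\mu_{n,k} \;=\; \sum_{\pi}\sum_{S \subseteq D(\pi)}(-1)^{|S|}q^{\Sigma(\pi,S)},
\qquad \Sigma(\pi,S) \,=\, \sum_{H\in\pi}h(H) + \sum_{D \in S}h(D),
\]
where $\pi$ ranges over Motzkin paths of length $n$ with $k$ horizontal steps and $D(\pi)$ denotes the set of down steps. Grouping by $u=|S|$, the lemma reduces to proving
\[
\sum_{(\pi,S),\,|S|=u} q^{\Sigma(\pi,S)} \;=\; q^{\binom{u+1}{2}}\qbin{k+u}{u}\!\left(\binom{n}{m-u}-\binom{n}{m-u-1}\right)
\]
for each $0 \leq u \leq m=(n-k)/2$.

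To prove this inner identity, I plan a bijective argument that isolates the two factors on the right. The Ballot number $\binom{n}{m-u}-\binom{n}{m-u-1}$ suggests counting nonnegative $\pm 1$ lattice paths of length $n$ ending at height $k+2u$, obtained by the reflection principle. A natural map sends $(\pi, S)$ to such a path $\lambda$ as follows: up steps of $\pi$ remain up, unmarked down steps remain down, while marked down steps (those in $S$) and horizontal steps are all relabeled as up steps. The resulting path has $m+k+u$ up steps and $m-u$ down steps, ends at height $k+2u$, and is pointwise at least as high as $\pi$, hence nonnegative. To invert this map, one must designate among the $k+u$ ``extra'' up steps of $\lambda$ which are marked down steps ($u$ of them) and which are horizontal steps ($k$ of them), subject to the constraint that the original Motzkin path be nonnegative. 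The factor $q^{\binom{u+1}{2}}\qbin{k+u}{u}$ should then arise as the $q$-enumeration of admissible designations: the power $q^{\binom{u+1}{2}}=q^{1+2+\cdots+u}$ records the minimal height contribution forced on the $u$ marked down steps, while $\qbin{k+u}{u}$ records the $q$-weighted freedom of interleaving the $k$ horizontal steps with the marked down steps.

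The main obstacle will be the detailed $q$-weight bookkeeping, since $\Sigma(\pi,S)$ is computed with respect to Motzkin heights, which differ from heights in $\lambda$ at each position by the number of horizontal and marked-down steps seen so far. To control this, I would read the steps of $\lambda$ from left to right and partition the $k+u$ ``extras'' according to the differences of consecutive heights, thereby reducing the $q$-sum over admissible designations to a standard $q$-binomial identity.

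As a parallel strategy in case the bijection turns out unwieldy, I would prove the formula by induction on $n$. The base case $n=k$ gives $\mu_{k,k}=1$, which matches the $u=0$ term of the right-hand side. For the inductive step, one derives a recurrence for $\mu_{n,k}$ from the Motzkin path decomposition (conditioning on the first return to height $0$, or on the first step) and checks that the proposed formula satisfies the same recurrence; this verification reduces to $q$-Pascal-type identities among the $\qbin{k+u}{u}$ and ordinary Pascal identities for the binomial differences $\binom{n}{m-u}-\binom{n}{m-u-1}$.
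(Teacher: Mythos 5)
Your opening expansion of $\prod_{D}(1-q^{h(D)})$ is correct, but the reduction that follows is not: the term-by-term identity you propose to prove for each fixed $u=|S|$ is \emph{false}, and the lemma only holds because of cancellation \emph{across} different values of $u$. Take $n=3$, $k=1$, so $m=1$; the three Motzkin paths are $HUD$, $UHD$, $UDH$ (writing $U,D,H$ for up, down, horizontal). For $u=0$ your left-hand side is $\sum_\pi q^{h(H)} = 1+q+1 = 2+q$, while your right-hand side is $\qbin{1}{0}\bigl(\binom{3}{1}-\binom{3}{0}\bigr)=2$. For $u=1$, each path contributes $q^{h(H)+1}$ (its unique down step starts at height $1$), so the left-hand side is $q+q^2+q = 2q+q^2$, while the right-hand side is $q\,\qbin{2}{1}\bigl(\binom{3}{0}-\binom{3}{-1}\bigr)=q+q^2$. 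The two discrepancies of $+q$ cancel in the alternating sum, so $\mu_{3,1}=2-q-q^2$ still comes out right --- but this kills the proposed bijection: for $u=1$ the only ballot path is $\lambda=UUU$, and its fiber has generating function $2q+q^2\neq q\,\qbin{2}{1}$, so the fiber weight is not the constant $q^{\binom{u+1}{2}}\qbin{k+u}{u}$ you need. Rescuing this line would require a sign-reversing involution that pairs terms with different $|S|$, which is a genuinely different and harder argument than the one you sketch.

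Your fallback induction is also underdeveloped at exactly the delicate point: since the weights $q^{h}$ and $1-q^{h}$ depend on \emph{absolute} height, the elevated factor in a first-step or first-return decomposition is not another instance of $\mu_{\cdot,\cdot}$ but a height-shifted variant, so $\mu_{n,k}$ satisfies no closed recurrence in itself; you would have to introduce and control a larger family $\mu^{(s)}_{n,k}$ of paths weighted at heights shifted by $s$, and the claimed reduction to ``$q$-Pascal-type identities'' is not justified. For comparison, the paper sidesteps all of this by recognizing $\sum_k \mu_{n,k}a^k$ as the $n$th moment of the continuous big $q$-Hermite polynomials (Al-Salam--Chihara with one parameter set to $0$) and quoting a known closed formula for those moments.
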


\begin{proof}
From the definition in terms of weighted Motzkin paths, $\sum_{k=0}^n \mu_{n,k} a^k$ can be seen as the $n$th
moment of some sequence of orthogonal polynomials, called the continuous Big $q$-Hermite polynomials \cite{koekoekleskyswarttouw}.
They are obtained from Al-Salam-Chihara orthogonal polynomials \cite{koekoekleskyswarttouw} by setting some parameter to $0$.
Then the formula for $\mu_{n,k}$ can be obtained, for example, from \cite[Theorem~6.1.1]{josuat2}.
\end{proof}

\begin{proof}[Proof of Theorem~\ref{thformulaqst1bis}]
Taking the coefficient of $x^k$ in both sides of \eqref{qstsum}, we get:
\begin{align*}
  (1-q)^{n-k} S_1[n,k] &= \sum_{i=0}^{n-k} \sum_{j=k}^{n-i} \tbinom{2n-i-j}{j} (-1)^{j-k} \tbinom{j}{k} q^i \mu_{2n-2j-i,i}   \\
                      &= \sum_{i=0}^{n-k} \sum_{j=k}^{n-i} \tbinom{2n-i-j}{j} (-1)^{j-k} \tbinom{j}{k} q^i \sum_{u=0}^{n-j-i} (-1)^u q^{\binom{u+1}{2}}
                               \qbin{u+i}{u} \left(   \tbinom{2n-2j-i}{n-j-i-u} - \tbinom{2n-2j-i}{n-j-i-u-1}   \right) \\
                      &= \sum_{i=0}^{n-k} \sum_{u=0}^{n-i} q^i  (-1)^u q^{\binom{u+1}{2}} \qbin{u+i}{u} \sum_{j=k}^{n-i-u} \tbinom{2n-i-j}{j} (-1)^{j-k} \tbinom{j}{k} 
                                \left(   \tbinom{2n-2j-i}{n-j-i-u} - \tbinom{2n-2j-i}{n-j-i-u-1}   \right).
\end{align*}
Then the latter sum over $j$ is  $E_u-E_{u+1}$ where we defined:
\begin{align*}
  E_u  &= \sum_{j=k}^{n-i-u} \binom{2n-i-j}{j} (-1)^{j-k} \binom{j}{k} \binom{2n-2j-i}{n-j-i-u} \\
       &= \sum_{j=0}^{n-i-u-k} \binom{2n-i-k-j}{j+k} (-1)^j \binom{j+k}{k} \binom{2n-2j-2k-i}{n-j-k-i-u} \\
       &= \sum_{j=0}^{n-i-u-k} (-1)^j \frac{ (2n-i-k-j)! } { (2n-2k-2j-i)! j!k! } \binom{2n-2j-2k-i}{n-j-k-i-u}.
\end{align*}
And, using $(\alpha-j)! = (-1)^j \frac{\alpha!}{(-\alpha)_j}$, we have:
\begin{align*}
E_u &= \sum_{j=0}^{n-i-u-k} (-1)^j \frac{ (2n-i-k-j)! } { j!k! } \times\frac{1}{ (n-j-k-i-u)! (n-j-k+u )!} \\
    &= \frac{ (2n-i-k)! }{ (n-k-i-u)! (n-k+u )! k! } \sum_{j=0}^{n-i-u-k} \frac{ (-n+k+i+u)_j (-n+k-u)_j } {(-2n+i+k)_j j! } \\
    &= \frac{ (2n-i-k)! }{ (n-k-i-u)! (n-k+u )! k! }  \times \frac{ (-n+i+u)_{n-k-i-u}  }{ (-2n+i+k)_{n-k-i-u}  } \\
    &= \frac{ (2n-i-k)! }{ (n-k-i-u)! (n-k+u )! k! }  \times \frac{ (n-i-u)! (n+u)! }{ k! (2n-i-k)! } = \binom{n+u}{k} \binom{n-i-u}{k}.
\end{align*}
So, from \eqref{diffB} we get
\begin{equation*}
  E_u - E_{u+1} = B^{n+1,k+1}_{u+i,u}.
\end{equation*}
So, 
\begin{align*}
  (1-q)^{n-k} S_1[n,k] = \sum_{i=0}^{n-k} \sum_{u=0}^{n-i} B^{n+1,k+1}_{u+i,u} (-1)^u q^{ i + \binom{u+1}{2}} \qbin{u+i}{u}.
\end{align*}
We can restrict the summation on indices such that $u+i\leq n-k$ since otherwise $B^{n+1,k+1}_{u+i,u}=0$.
This gives \eqref{formulaqst1bis}.
\end{proof}

\begin{proof}[Proof of Theorem~\ref{thformulaqst1}]
We show that \eqref{formulaqst1bis} implies \eqref{formulaqst1}.
Using the recursion for $q$-binomial coefficient in \eqref{recqbin} a couple of times, we have:
\begin{align*}
   q^{i + \binom{j}{2} } \qbin{i}{j} &= q^{\binom{j+1}{2}} \qbin{i+1}{j+1} - q^{\binom{j+1}{2}} \qbin{i}{j+1}  \\
       &= q^{\binom{j+2}{2}} \qbin{i}{j+1} + q^{\binom{j+1}{2}} \qbin{i}{j} - q^{\binom{j+2}{2}} \qbin{i-1}{j+1} - q^{\binom{j+1}{2}} \qbin{i-1}{j}.
\end{align*}
So:
\begin{align*}
   (1-q)^{n-k} S_1[n,k] &= \sum_{ 0  \leq j \leq i \leq n-k } B^{n+1,k+1}_{i,j} (-1)^j q^{i + \binom{j}{2} } \qbin{i}{j}  \\
      &=   \sum_{ 0  \leq j \leq i \leq n-k } B^{n+1,k+1}_{i,j} (-1)^j q^{\binom{j+2}{2}} \qbin{i}{j+1}
         + \sum_{ 0  \leq j \leq i \leq n-k } B^{n+1,k+1}_{i,j} (-1)^j q^{\binom{j+1}{2}} \qbin{i}{j}  - \\
       &   \sum_{ 0  \leq j \leq i \leq n-k } B^{n+1,k+1}_{i,j} (-1)^j q^{\binom{j+2}{2}} \qbin{i-1}{j+1}
         - \sum_{ 0  \leq j \leq i \leq n-k } B^{n+1,k+1}_{i,j} (-1)^j q^{\binom{j+1}{2}} \qbin{i-1}{j}  \\
      &=   \sum_{ 1  \leq j \leq i \leq n-k } B^{n+1,k+1}_{i,j-1} (-1)^{j+1} q^{\binom{j+1}{2}} \qbin{i}{j}
         + \sum_{ 0  \leq j \leq i \leq n-k } B^{n+1,k+1}_{i,j} (-1)^j q^{\binom{j+1}{2}} \qbin{i}{j}  - \\
       &   \sum_{ 1  \leq j \leq i < n-k } B^{n+1,k+1}_{i+1,j-1} (-1)^{j+1} q^{\binom{j+1}{2}} \qbin{i}{j}
         - \sum_{ 0  \leq j \leq i < n-k } B^{n+1,k+1}_{i+1,j} (-1)^j q^{\binom{j+1}{2}} \qbin{i}{j} .
\end{align*}
Note that in the last two sums, we can include the terms where $i=n-k$, since $B^{n+1,k+1}_{n-k+1,j}= 0$ (the second binomial vanishes in \eqref{factB}).
We can also include the terms where $j=0$ in the first and third sums, since they cancel each other out (to do that we also need to check from 
the formula that $B^{n+1,k+1}_{0,-1}=0$ and $B^{n+1,k+1}_{n-k+1,-1}= 0$).
Eventually we get:
\begin{align*}
    (1-q)^{n-k} S_1[n,k] = \sum_{ 0  \leq j \leq i \leq n-k } (-B^{n+1,k+1}_{i,j-1}+B^{n+1,k+1}_{i,j}+B^{n+1,k+1}_{i+1,j-1}-B^{n+1,k+1}_{i+1,j}) (-1)^j q^{\binom{j+1}{2}} \qbin{i}{j}. 
\end{align*}
To obtain \eqref{formulaqst1}, it remains to prove:
\begin{align} \label{rel4B}
   -B^{n+1,k+1}_{i,j-1}+B^{n+1,k+1}_{i,j}+B^{n+1,k+1}_{i+1,j-1}-B^{n+1,k+1}_{i+1,j} = B^{n,k}_{i,j}.
\end{align}
But this is easy to do by elementary manipulation on binomial coefficients. We omit details.
\end{proof}

%

\begin{rema} \label{altformulas}
It is possible to link \eqref{formulaqst2} and \eqref{formulaqst2bis}, in the same way that we linked
\eqref{formulaqst1} and \eqref{formulaqst1bis}. The related identity on the coefficients $A^{n,k}_{i,j}$ is 
\begin{align} \label{rel4A}
   A^{n-1,k-1}_{i,j-1}+A^{n-1,k-1}_{i,j}+A^{n-1,k-1}_{i+1,j-1}+A^{n-1,k-1}_{i+1,j} = A^{n,k}_{i,j}.
\end{align}
\end{rema}
Despite the similarity, we have got no clear link between \eqref{rel4A} and \eqref{rel4B}.

\begin{rema}
We have seen that the bijection $\phi$ from the proof of Theorem~\ref{qfrac1} gives an interpretation 
of $S_1[n,k]$ in terms of weighted Dyck paths of length $2n$ with $k$ peaks. It follows that $S_1[n,k]|_{q=0}$
is the number of such weighted Dyck paths  where all weights are $1$.
It is well known that the number of Dyck paths of length $2n$ with $k$ peaks is $N_{n,k}$, so that:
\begin{equation} \label{qst1q0}
  S_1[n,k]|_{q=0} = N_{n,k}.
\end{equation}
\end{rema}

\section{The first identity}

\begin{theo} \label{thqid1}
If $0\leq k \leq n$, we have:
\begin{equation} \label{qid1}
   S_1[n,k] = (-1)^{n-k} \sum_{j=0}^{n-k} (-1)^j
   \binom{n-1+j}{n-k+j} \binom{2n-k}{n-k-j} S_2[n-k+j,j].
\end{equation} 
\end{theo}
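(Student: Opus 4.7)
The plan is to reduce the identity to a pure binomial identity by invoking the explicit closed forms for $S_1[n,k]$ and $S_2[n,k]$, then evaluate the resulting sum with a Pfaff--Saalschütz type summation. Multiplying both sides of \eqref{qid1} by $(1-q)^{n-k}$, the left-hand side becomes, by Theorem~\ref{thformulaqst1},
\[
  \sum_{0\leq j\leq i\leq n-k} (-1)^j B^{n,k}_{i,j}\, q^{\binom{j+1}{2}} \qbin{i}{j}.
\]
For each term on the right-hand side the factor $(1-q)^{n-k}S_2[n-k+l,l]$ (renaming the summation index $j\rightsquigarrow l$) is handled by Theorem~\ref{thformulaqst2} applied with the pair $(n-k+l,l)$, producing a double sum over indices $(a,b)$ with $0\leq b\leq l$ and $b\leq a\leq n-k$.

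After exchanging the order of summation so that $(a,b)$ are the outer indices and $l$ is the inner one, I would match coefficient-wise against the expansion of the left-hand side. Concretely, setting $i=a$ and $j=b$, it suffices to establish
\[
  B^{n,k}_{i,j} \;=\; (-1)^{n-k+i+j} \sum_{l=j}^{n-k} (-1)^l \binom{n-1+l}{n-k+l}\binom{2n-k}{n-k-l}\, A^{n-k+l,l}_{i,j}
\]
for all $(i,j)$ with $0\leq j\leq i\leq n-k$. (This is sufficient for the polynomial identity in $q$, even though the basis $\{q^{\binom{j+1}{2}}\qbin{i}{j}\}$ is not linearly independent.)

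I would then invoke the factorized forms \eqref{factA} and \eqref{factB}. The common factor $i+j+1$ cancels, the combinatorial prefactors depending only on $n,k,i,j$ pull outside the sum, and what remains of the sum over $l$ is a quotient of products of Pochhammer symbols, i.e.\ a terminating hypergeometric series of argument $1$. The natural candidate is a balanced ${}_3F_2$, so the Pfaff--Saalschütz summation \eqref{pfaffsaalschutz} should close it in one shot; if the raw indices land one step off the Saalschütz balance, one application of the contiguity relation \eqref{eqcontig} should correct them.

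The main obstacle is purely organizational: assembling the factorials into the correct Pochhammer packages so that the Saalschütz balance condition holds, and then recognizing that the resulting product of $(\gamma-\alpha)_m(\gamma-\beta)_m / ((\gamma)_m(\gamma-\alpha-\beta)_m)$ precisely reproduces $\binom{n+j}{k-1}\binom{n-i-2}{k-2}/(n+j)$, i.e.\ the factorized expression \eqref{factB} for $B^{n,k}_{i,j}$ (up to the already-removed signs and the factor $i+j+1$). No further combinatorial input is needed; the whole argument is a book-keeping exercise once the hypergeometric form is identified.
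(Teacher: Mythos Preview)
Your overall architecture matches the paper's: substitute the closed form \eqref{formulaqst2} into the right-hand side, exchange summations, and reduce to the coefficient identity
\[
  D^{n,k}_{h,i} \;:=\; \sum_{l=i}^{n-k} (-1)^l \binom{n-1+l}{n-k+l}\binom{2n-k}{n-k-l}\, A^{n-k+l,l}_{h,i}
  \;=\; (-1)^{n+k+h+i}\, B^{n,k}_{h,i},
\]
which is exactly what the paper proves.

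The gap is in your last paragraph. After inserting \eqref{factA} and packaging the factorials, the sum over $l$ becomes the terminating series
\[
  {}_3F_2\!\left(\begin{array}{c} k+i-n \, ; \, n+i \, ; \, n-k+i+2 \\ n+i+1 \, ; \, i+h+2 \end{array}\Big|\,1\right),
\]
and this is Saalsch\"utzian only when $h=i$: the balance defect is exactly $h-i$, not $1$. So Pfaff--Saalsch\"utz ``in one shot'' works only on the diagonal, and a single contiguity step will not fix the generic case. What the paper does (and what you would need to do) is evaluate the diagonal $h=i$ via \eqref{pfaffsaalschutz}, then derive a two-term recurrence in $i$ (obtained from the elementary identity $(n+i)_j/(n+i+1)_j = 1 - j/(n+i+j)$, which splits the series and shifts $i\mapsto i+1$), check that $B^{n,k}_{h,i}$ satisfies the same recurrence via \eqref{factB}, and descend from $i=h$ to general $i$. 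So the ``book-keeping exercise'' you anticipate is actually an induction on $h-i$, and recognizing this is the missing idea in your sketch.
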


\begin{proof}
We compute the right-hand side of \eqref{qid1} using \eqref{formulaqst2}, and simplify the result to get
the right-hand side of \eqref{formulaqst1}. Up to the simple factor $(q-1)^{n-k}$, this gives: 
\begin{align*}
    (1-q)^{n-k} \sum_{j=0}^{n-k} (-1)^j 
   \binom{n-1+j}{n-k+j} & \binom{2n-k}{n-k-j} S_2[n-k+j,j] \\
    &= \sum_{j=0}^{n-k} (-1)^j
   \binom{n-1+j}{n-k+j} \binom{2n-k}{n-k-j} 
    \sum\limits_{i=0}^{j} \sum\limits_{h=i}^{n-k} (-1)^h
 A^{n-k+j,j}_{h,i}
 q^{\tbinom{i+1}2} \qbin h i  \\
 &= \sum_{h=0}^{n-k} \sum_{i=0}^h (-1)^h D^{n,k}_{h,i} q^{\tbinom{i+1}2} \qbin h i ,
\end{align*}
where $D^{n,k}_{h,i}$, the sum to be simplified, is defined by 
\begin{align*}
 D^{n,k}_{h,i} =
 \sum_{j=i}^{n-k} (-1)^j
 \binom{n-1+j}{n-k+j} \binom{2n-k}{n-k-j}
 A^{n-k+j,j}_{h,i}.
\end{align*}
To check the exchange of summation, note that the range of indices $h,i,j$ is defined by $0\leq h,i,j \leq n-k$ and $i\leq h,j$.
To finish the proof, it remains only to show:
\begin{align} \label{dequalb}
 D^{n,k}_{h,i} = (-1)^{n+k+h+i} B^{n,k}_{h,i}.
\end{align}
We rewrite the sum as an hypergeometric series, using the factorized formula for $A^{n-k+j,j}_{h,i}$ in Equation~\eqref{factA}:
\begin{align*}
   D^{n,k}_{h,i} &= \sum_{j=i}^{n-k} (-1)^j 
                           \frac{(n-1+j)!}{(n-k+j)!(k-1)!} \times \frac{(2n-k)! }{ (n-k-j)!(n+j)! }  \times \frac{h+i+1}{n-k+j+1} \\
                           & \hspace{5.5cm} \times \frac{(n-k+j+1)!}{(j-i)!(n-k+i-1)!} \times \frac{(n-k+j+1)!}{(j+h+1)!(n-k-h)! }  \\
   &= \frac{ (h+i+1) (2n-k)!}{(k-1)!(n-k-h)!(n-k+i+1)!}\sum_{j=i}^{n-k} (-1)^j \frac{ (n-1+j)!  (n-k+j+1)!  }{   (n-k-j)! (n+j)! (j+h+1)!  (j-i)!  } \\
   &= \frac{ (h+i+1) (2n-k)!}{(k-1)!(n-k-h)!(n-k+i+1)!}\sum_{j\geq 0} (-1)^{j+i} \frac{ (n-1+j+i)!  (n-k+j+i+1)!  }{   (n-k-j-i)! (n+j+i)! (j+i+h+1)!  j!  }.
\end{align*}
Then, we use $(m)_j = \frac{(m+j-1)!}{(m-1)!}$ if $m\geq 0$ to find:
\begin{align*}
   D^{n,k}_{h,i} &= \frac{ (h+i+1) (2n-k)! (n-1+i)!}{(k-1)!(n-k-h)!}\sum_{j\geq 0} (-1)^{j+i} \frac{ (n+i)_j  (n-k+i+2)_j  }{   (n-k-j-i)! (n+j+i)! (j+i+h+1)!  j!  } \\
   &= \frac{ (h+i+1) (2n-k)! }{ (n+i) (k-1)!(n-k-h)!}\sum_{j\geq 0} (-1)^{j+i} \frac{ (n+i)_j  (n-k+i+2)_j  }{   (n-k-j-i)! (n+i+1)_j (j+i+h+1)!  j!  } \\
   &= \frac{ (2n-k)! }{ (n+i) (k-1)!(n-k-h)! (h+i)! }\sum_{j\geq 0} (-1)^{j+i} \frac{ (n+i)_j  (n-k+i+2)_j  }{   (n-k-j-i)! (n+i+1)_j (i+h+2)_j  j!  }.
\end{align*}
For the last step, we use $(-m)_j = (-1)^j \frac{m!}{(m-j)!} $ to find:
\begin{align} \label{Dhyperg}
   D^{n,k}_{h,i} = \frac{ (-1)^{i} (2n-k)! }{ (n+i) (k-1)!(n-k-h)! (h+i)! (n-k-i)! }\sum_{j\geq 0}  \frac{ (k+i-n)_j (n+i)_j  (n-k+i+2)_j  }{    (n+i+1)_j (i+h+2)_j  j!  }.
\end{align}


The Pfaff-Saalschütz summation \eqref{pfaffsaalschutz} can be applied if $h=i$, since then 
the sum over $j$ becomes:
\begin{align*}
   {}_3 F_2 \left( \begin{array}{c} k-n+i \, ; \, n-k+2+i\, ; \, n+i \\ n+i+1 \, ; \, 2i+2 \end{array}  \Big| 1 \right).
\end{align*}
It follows:
\begin{align*}
   D^{n,k}_{i,i} &= \frac{ (-1)^{i} (2n-k)! }{ (n+i) (k-1)!(n-k-i)! (2i)! (n-k-i)! } \times 
   \frac{  (k-1)_{n-k-i}  (1)_{n-k-i} }{ (n+i+1)_{n-k-i} (k-i-n-1)_{n-k-i} }  \\
   &=  
   \frac{ (-1)^{i} (n+i)! }{ (n+i) (k-1)!(n-k-i)! (2i)! } \times 
   \frac{  (k-1)_{n-k-i}   }{  (k-i-n-1)_{n-k-i} }  \\   
   &=  
   \frac{ (-1)^{i} (n+i)! }{ (n+i) (k-1)!(n-k-i)! (2i)! } \times 
    \frac{ (n-i-2)! }{ (k-2)!} \times 
   \frac{  (-1)^{n-k-i}  (2i+1)!  }{ (n-k+i+1)! } \\   
   &=
    (-1)^{n-k}\frac{2i+1}{n+i} \times \frac{ (n+i)! }{(k-1)! (n+i-k+1)! } \times \frac{ (n-i-2)!}{ (n-k-i)! (k-2)! } = (-1)^{n-k} B^{n,k}_{i,i},
\end{align*}
using the factorized formula in \eqref{factB}.
%
%
So \eqref{dequalb} holds when $h=i$. 

Then, we give a simple relation between $D^{n,k}_{h,i}$ and $D^{n,k}_{h,i+1}$,
and check that the same holds between $B^{n,k}_{h,i}$ and $B^{n,k}_{h,i+1}$ (up to a minus sign).
In Equation~\eqref{Dhyperg}, we can use $\frac{(n+i)_j}{(n+i+1)_j} = \frac{n+i}{n+i+j} = 1 - \frac{j}{n+i+j}$, to get: 
\begin{align*}
   D^{n,k}_{h,i} &=  \frac{ (-1)^{i} (2n-k)! }{ (n+i) (k-1)!(n-k-h)! (h+i)! (n-k-i)! }\sum_{j\geq 0}  \frac{ (k+i-n)_j (n+i) (n-k+i+2)_j  }{  (n+i+j) (i+h+2)_j  j!  } \\
                 &=  \frac{ (-1)^{i} (2n-k)! }{ (n+i) (k-1)!(n-k-h)! (h+i)! (n-k-i)! } \times \\ 
                 & \hspace{3cm}  \left( \sum_{j\geq 0}  \frac{ (k+i-n)_j (n-k+i+2)_j  }{  (i+h+2)_j  j!  } 
                            - \sum_{j\geq 1}  \frac{ (k+i-n)_j (n-k+i+2)_j  }{  (n+i+j) (i+h+2)_j  (j-1)!  }  \right).
\end{align*}
The first sum over $j$ can be evaluated using \eqref{gauss}. It is equal to:
\begin{align*}
    \frac{  (h+k-n)_{n-k-i}  }{  (i+h+2)_{n-k-i}  }.
\end{align*}
The numerator is the product $(h+k-n)(h+k-n+1)\dots (h-i-1)$. Since $h+k-n\leq 0$, this is $0$ if $h-i-1\geq 0$ i.e.~$i+1\leq h$
(which we can assume since we want a relation between $D^{n,k}_{h,i}$ and $D^{n,k}_{h,i+1}$).
As for the second sum, using $(\alpha)_{j+1} = \alpha (\alpha+1)_j$, we have:
\begin{align*}
    \sum_{j\geq 1}  \frac{ (k+i-n)_j (n-k+i+2)_j  }{  (n+i+j) (i+h+2)_j  (j-1)!  }
     & = \tfrac{(k+i-n)(n-k+i+2)}{(i+h+2)} \times  \sum_{j\geq 0}  \frac{ (k+i-n+1)_j (n-k+i+3)_j  }{  (n+i+j+1) (i+h+3)_j  j!  } \\
     & = \tfrac{(k+i-n)(n-k+i+2)}{(i+h+2)(n+i+1)} \times  \sum_{j\geq 0}  \frac{  (k+i-n+1)_j (n+i+1)_j (n-k+i+3)_j  }{  (n+i+2)_j (i+h+3)_j  j!  }.  
\end{align*}
Denote $S$ the latter sum, then we have:
\begin{align*}
  D^{n,k}_{h,i} = \frac{ (-1)^{i} (2n-k)! }{ (n+i) (k-1)!(n-k-h)! (h+i)! (n-k-i)! } \times \frac{(k+i-n)(n-k+i+2)}{(i+h+2)(n+i+1)} \times  S,
\end{align*}
and from \eqref{Dhyperg}:
\begin{align*}
  D^{n,k}_{h,i+1} = \frac{ (-1)^{i+1} (2n-k)! }{ (n+i+1) (k-1)!(n-k-h)! (h+i+1)! (n-k-i-1)! }  \times  S.
\end{align*}
It follows:
\begin{equation} \label{relDi}
 (n+i)(i+h+2) D^{n,k}_{h,i}  =  - (h+i+1) (n-k+i+2) D^{n,k}_{h,i+1}.
\end{equation}
From \eqref{factB} we also have:
\begin{align} \label{relBi}
  \frac{B^{n,k}_{h,i}}{B^{n,k}_{h,i+1}} = \frac{(h+i+1)(n+i+1)\binom{n+i}{k-1}}{ (h+i+2)(n+i)\binom{n+i+1}{k-1} }
        = \frac{ (h+i+1)(n+i-k+2) }{ (h+i+2)(n+i) }.
\end{align}
Equations~\eqref{relDi} and \eqref{relBi} implies that \eqref{dequalb} also holds in the range $0 \leq i \leq h$, since we have
already seen that it holds when $h=i$. This completes the proof.
\end{proof}

Note that the particular case $q=0$ in the previous theorem, together with \eqref{qst2q0} and \eqref{qst1q0}, 
gives the following identity (that will be used in the next section).

\begin{coro} If $0\leq k \leq n$, we have:
\begin{equation} \label{idnar}
  N_{n,k} = (-1)^{n-k} \sum_{j=0}^{n-k} (-1)^j \binom{n-1+j}{n-k+j} \binom{2n-k}{n-k-j} N_{n-k+j,j}. \\
\end{equation}
\end{coro}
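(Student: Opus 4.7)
The plan is to deduce the identity directly from Theorem~\ref{thqid1} by evaluating both sides at $q=0$. Indeed, \eqref{qid1} is an identity of polynomials in $q$, so specializing is legitimate, and the binomial coefficients on the right-hand side do not depend on $q$.

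Concretely, I would proceed as follows. First, set $q=0$ in \eqref{qid1}. The left-hand side becomes $S_1[n,k]|_{q=0}$, which by \eqref{qst1q0} equals $N_{n,k}$. On the right-hand side, each Stirling factor $S_2[n-k+j,j]$ gets replaced by $S_2[n-k+j,j]|_{q=0}$, which by \eqref{qst2q0} equals $N_{n-k+j,j}$. Substituting these two evaluations into \eqref{qid1} produces \eqref{idnar} verbatim.

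The only thing to check is that the boundary conventions agree. The summation index $j=0$ contributes a factor $N_{n-k,0}$, which, under the extended Narayana convention $N_{0,0}=1$ and $N_{m,0}=0$ for $m\geq 1$, matches $S_2[n-k,0]|_{q=0}$: the set partition count $S_2[m,0]$ is $1$ if $m=0$ and $0$ otherwise, independently of $q$. Likewise, for $n=k=0$ the identity reduces to $N_{0,0}=N_{0,0}$, while for $n\geq 1$, $k=0$, both sides vanish. There is no real obstacle here; the entire argument is a one-line specialization, and the corollary is stated only to record, for use in the next section, the ``classical'' shadow of the $q$-identity on Narayana numbers.
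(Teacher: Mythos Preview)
Your proof is correct and matches the paper's own argument exactly: the corollary is obtained by specializing Theorem~\ref{thqid1} at $q=0$ and invoking \eqref{qst2q0} and \eqref{qst1q0}. Your additional check of the boundary conventions is sound but not strictly needed, since those conventions are already built into the definitions used in the theorem.
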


\section{The second identity}

\begin{theo} \label{thqid2}
If $0\leq k \leq n$, we have:
\begin{equation} \label{qid2}
   S_2[n,k] = (-1)^{n-k} \sum_{j=0}^{n-k} (-1)^j
   \binom{n-1+j}{n-k+j} \binom{2n-k}{n-k-j} S_1[n-k+j,j].
\end{equation} 
\end{theo}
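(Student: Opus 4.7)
The proof is the direct analogue of the one given for Theorem~\ref{thqid1}, with the roles of $S_1$ and $S_2$ essentially swapped. Multiply both sides of \eqref{qid2} by $(1-q)^{n-k}$. On the left, use \eqref{formulaqst2} to rewrite $(1-q)^{n-k}S_2[n,k]$ in the basis $q^{\binom{i+1}{2}}\qbin{h}{i}$ with coefficients $(-1)^h A^{n,k}_{h,i}$. On the right, apply \eqref{formulaqst1} to expand $(1-q)^{n-k}S_1[n-k+j,j]$ (the exponent works out since $(n-k+j)-j = n-k$) and exchange the order of summation. After this exchange one finds
\[
(1-q)^{n-k}\,\text{RHS of }\eqref{qid2}=\sum_{0\le i\le h\le n-k}(-1)^{n-k+i}\,\tilde E^{n,k}_{h,i}\,q^{\binom{i+1}{2}}\qbin{h}{i},
\]
where
\[
\tilde E^{n,k}_{h,i}=\sum_{j=i}^{n-k}(-1)^j\binom{n-1+j}{n-k+j}\binom{2n-k}{n-k-j}B^{n-k+j,j}_{h,i}.
\]
The identity \eqref{qid2} thus reduces to the closed form
\[
\tilde E^{n,k}_{h,i}=(-1)^{n+k+h+i}\,A^{n,k}_{h,i}.
\]

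To prove this, I would substitute the factorized formula \eqref{factB} for $B^{n-k+j,j}_{h,i}$, convert factorials into Pochhammer symbols via $(m)_j=(m+j-1)!/(m-1)!$ and $(-m)_j=(-1)^j m!/(m-j)!$, and recognize the sum over $j$ as a ${}_3F_2(1)$. For a suitable boundary value of the parameter (the natural candidates are $h=i$ or $h=n-k$, by analogy with what happens for $D^{n,k}_{h,i}$ in the proof of Theorem~\ref{thqid1}), the resulting series is Saalschützian and \eqref{pfaffsaalschutz} evaluates it in closed form; a direct simplification using \eqref{factA} then matches it with $\pm A^{n,k}_{h,i}$. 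For the remaining values, I would use the trick from the proof of Theorem~\ref{thqid1} — splitting $\alpha/(\alpha+j)=1-j/(\alpha+j)$ or shifting an index by one via the contiguity relation \eqref{eqcontig} — to obtain a first-order recursion in $i$ (or $h$) for $\tilde E^{n,k}_{h,i}$, and verify from \eqref{factA} that the ratio $A^{n,k}_{h,i}/A^{n,k}_{h,i+1}$ satisfies the same recursion up to the expected sign.

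The main obstacle is purely computational: the parameters appearing in $B^{n-k+j,j}_{h,i}$ (namely $n-k+j+i$ and $n-k+j-h-2$ in the two binomials) shift differently from those in $A^{n-k+j,j}_{h,i}$ (which has two copies of $n-k+j+1$), so the Pochhammer symbols in the resulting ${}_3F_2$ are not the same as in the earlier proof; Pfaff--Saalschütz and the contiguous relation still apply, but after a different reshuffling of parameters. If the algebra is recalcitrant, one can also try expanding $S_1[n-k+j,j]$ via the alternative formula \eqref{formulaqst1bis} and then converting the basis $q^{i+\binom{j}{2}}\qbin{i}{j}$ back to $q^{\binom{j+1}{2}}\qbin{i}{j}$ by the same recursion on $q$-binomials used in the proof of Theorem~\ref{thformulaqst1}. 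As a consistency check, specialising $q=0$ should recover exactly the Narayana identity \eqref{idnar} already derived from Theorem~\ref{thqid1}, which both constrains the sign conventions and confirms that the two identities agree on their noncrossing part.
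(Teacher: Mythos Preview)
Your overall scheme matches the paper's, but there is a genuine gap: the coefficient identity you aim for,
\[
\tilde E^{n,k}_{h,i}=(-1)^{n+k+h+i}A^{n,k}_{h,i},
\]
is \emph{false} when $i=0$ (e.g.\ $n=2$, $k=1$, $h=0$ gives $\tilde E=0$ but $A=3$). The paper proves it only for $i>0$ and treats the $i=0$ terms separately: since $q^{\binom{1}{2}}\tqbin{h}{0}=1$, these are exactly the constant terms in $q$ on both sides, and matching them amounts to the $q=0$ specialisation of \eqref{qid2}, which is the Narayana identity \eqref{idnar}. So \eqref{idnar} is not a mere consistency check but an essential ingredient of the argument---available only because it was already derived as a corollary of the \emph{first} identity.

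Two smaller corrections. Your summation range $j=i,\dots,n-k$ in $\tilde E^{n,k}_{h,i}$ is wrong: in \eqref{formulaqst1} the inner indices run over $0\le i\le h\le n-k$ independently of $j$, so after exchanging sums the range is $j=0,\dots,n-k$ (in the first identity it was the structure of $A^{n-k+j,j}_{h,i}$ that forced $i\le j$; $B^{n-k+j,j}_{h,i}$ imposes no such constraint). And the hypergeometric series obtained is a ${}_4F_3$, not a ${}_3F_2$: the two binomials in $B^{n-k+j,j}_{h,i}$ carry the distinct shifts $n-k+j+i-1$ and $n-k+j-h-1$, contributing an extra numerator parameter. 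Pfaff--Saalsch\"utz is therefore not directly applicable; instead the paper splits the sum via \eqref{diffB} into a difference $\bar C^{n,k}_{h,i}-\bar C^{n,k}_{h+1,i+1}$, writes $\bar C^{n,k}_{h,i}$ as a ${}_4F_3$, and uses the contiguity relation \eqref{eqcontig} to induct on $h$ down to $h=n-k$ (where a numerator parameter becomes $1$ and the series collapses to a ${}_3F_2$), then on $i$ down to $i=1$ (where it becomes a ${}_2F_1$ evaluable by \eqref{gauss}).
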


\begin{proof}
The scheme of proof is the same as in the previous section.
Using \eqref{formulaqst1}, the right hand side of \eqref{qid2} is
(up to the simple factor $(q-1)^{n-k}$ ):
\begin{align*}
    (1-q)^{n-k} \sum_{j=0}^{n-k} (-1)^j
   \binom{n-1+j}{n-k+j} & \binom{2n-k}{n-k-j} S_1[n-k+j,j] \\
   &  = \sum_{j=0}^{n-k} (-1)^j
   \binom{n-1+j}{n-k+j} \binom{2n-k}{n-k-j} 
    \sum\limits_{i=0}^{n-k} \sum\limits_{h=i}^{n-k} (-1)^i
  B^{n-k+j,j}_{h,i}
 q^{\tbinom{i+1}2} \qbin h i  \\
 &= \sum_{h=0}^{n-k} \sum_{i=0}^h (-1)^i  C^{n,k}_{h,i} q^{\tbinom{i+1}2} \qbin h i ,
\end{align*}
where $C^{n,k}_{h,i}$ is defined by:
\begin{align*}
 C^{n,k}_{h,i} &= \sum_{j=0}^{n-k} (-1)^j  \binom{n-1+j}{n-k+j} \binom{2n-k}{n-k-j} B^{n-k+j,j}_{h,i}.
\end{align*}
To obtain the right hand side of \eqref{formulaqst2} and finish the proof, we need to prove:
\begin{align} \label{sumcsuma}
        \sum_{h=0}^{n-k} \sum_{i=0}^h (-1)^i  C^{n,k}_{h,i} q^{\tbinom{i+1}2} \qbin h i 
 =
  (-1)^{n-k} \sum_{h=0}^{n-k} \sum_{i=0}^h (-1)^h  A^{n,k}_{h,i} q^{\tbinom{i+1}2} \qbin h i .
\end{align}
We will show that in the case $i>0$, we have:
\begin{align} \label{cequala}
 C^{n,k}_{h,i} = (-1)^{n+k+h+i} A^{n,k}_{h,i}.
\end{align}
Even if this does not hold for $i=0$, this is overcome as follows. We note that in both sides of \eqref{sumcsuma}, the terms where
$i=0$ are constant terms. So, after proving \eqref{cequala} for $i>0$, it remains only to check that both constant terms of
\eqref{sumcsuma} agree, or equivalently, that both constant terms of \eqref{qid2} agree. But this follows from \eqref{idnar}
since our $q$-Stirling numbers at $q=0$ are the Narayana numbers, see \eqref{qst2q0} and \eqref{qst1q0}.

So, let us prove \eqref{cequala} for $i>0$. To do this, we also define
\begin{align*}
 \bar C ^{n,k}_{h,i} &= \sum_{j=0}^{n-k} (-1)^j  \binom{n-1+j}{n-k+j} \binom{2n-k}{n-k-j} 
       \binom{n-k+j+i-1}{j-1} \binom{n-k+j-h-1}{j-1}.
\end{align*}
so that, from \eqref{diffA}, we have:
\begin{align*}
  C ^{n,k}_{h,i} = \bar C ^{n,k}_{h,i} - \bar C ^{n,k}_{h+1,i+1}.
\end{align*}
Then, it suffices to prove
\begin{align} \label{barCeqbins}
 \bar C ^{n,k}_{h,i} =  (-1)^{n+k+h+i} \binom{n}{k+h} \binom{n}{k-i},
\end{align}
since \eqref{cequala} follows, using \eqref{diffB}. Note that we need to define $\bar C ^{n,k}_{h,i}$
in the case $h=n-k+1$ and check that this is $0$, and this is easily done because $\binom{n-k+j-h-1}{j-1} = 0$ then.
So we keep assuming $h\leq n-k$.

We rewrite the sum for $\bar C ^{n,k}_{h,i}$ in hypergeometric form:
\begin{align*}
 \bar C ^{n,k}_{h,i} &= \sum_{j=0}^{n-k} (-1)^j  \frac{ (n-1+j)! (2n-k)! (n-k+j+i-1)! (n-k+j-h-1)! }
         { (n-k+j)!(k-1)! (n-k-j)! (n+j)! (j-1)!^2 (n-k+i)! (n-k-h)! } \\
     &= - \sum_{j\geq 0} (-1)^j  \frac{ (n+j)! (2n-k)! (n-k+j+i)! (n-k+j-h)! }
         { (n-k+j+1)!(k-1)! (n-k-j-1)! (n+j+1)! j!^2 (n-k+i)! (n-k-h)! } \\
     &= - \frac{(2n-k)!}{(k-1)!} \sum_{j\geq 0} (-1)^j  \frac{ (n+j)! (n-k+i+1)_j (n-k+j-h)! }
         { (n-k+j+1)! (n-k-j-1)! (n+j+1)!  (n-k-h)! j!^2 } \\
     &= - \frac{(2n-k)!}{(k-1)!} \sum_{j\geq 0} (-1)^j  \frac{ (n+j)! (n-k+i+1)_j (n-k-h+1)_j }
         { (n-k+j+1)! (n-k-j-1)! (n+j+1)! (1)_j j! } \\
     &= - \frac{(2n-k)!}{(k-1)!(n-k+1)!} \sum_{j\geq 0} (-1)^j  \frac{ (n+j)! (n-k+i+1)_j (n-k-h+1)_j }
         { (n-k+2)_j (n-k-j-1)! (n+j+1)! (1)_j j! } \\
     &= - \frac{(2n-k)!}{(n+1)(k-1)!(n-k+1)!} \sum_{j\geq 0} (-1)^j  \frac{ (n+1)_j (n-k+i+1)_j (n-k-h+1)_j }
         { (n-k+2)_j (n-k-j-1)! (n+2)_j (1)_j j! }.
\end{align*}
For the last step, use $(-m)_j = (-1)^j \frac{m!}{(m-j)!} $ to find:
\begin{align} \label{barChypergeo}
  \bar C ^{n,k}_{h,i} = - \tfrac{ (2n-k)! }{ (n+1) (n-k-1)!  (n-k+1)! (k-1)! }
  {}_4 F_3 \left( \begin{array}{c} k-n+1 \, ; \,  n+1 \, ; \, n-k-h+1 \, ; \, n-k+i+1 \\ 1 \, ; \, n+2 \, ; \, n-k+2 \end{array}  \Big| 1 \right).
\end{align}
Note that $h$ and $i$ only appear in the argument of the hypergeometric sum. So we can use Equation~\eqref{eqcontig}
to find a relation between $\bar C^{n,k}_{h,i}$ and the same with shifted indices, as follows:
\begin{align} \label{relcontig}
  -(h+i) \bar C^{n,k}_{h,i} = (n-k-h+1) \bar C^{n,k}_{h-1,i} - (n-k+i+1) \bar C^{n,k}_{h,i+1}.
\end{align}
On the other side, we have:
\begin{align*}
    (n-k-h+1) \binom{n}{k+h-1} \binom{n}{k-i} - & (n-k+i+1) \binom{n}{k+h}\binom{n}{k-i-1} \\
   &= (k+h) \binom{n}{k+h}\binom{n}{k-i} - (k-i) \binom{n}{k+h}\binom{n}{k-i} \\
   &=(h+i) \binom{n}{k+h}\binom{n}{k-i}.
\end{align*}
This means that the right hand side of \eqref{barCeqbins} satisfy the same relation as $\bar C^{n,k}_{h,i}$ in \eqref{relcontig}.
So, proceeding by induction on $h$, we are left to prove \eqref{barCeqbins} in the initial case $h=n-k$, since
the case $0\leq h \leq n-k$ follows. Note that in \eqref{relcontig}, the factor in front 
of $\bar C^{n,k}_{h-1,i}$ does not vanish in this range.

From \eqref{barChypergeo}, we have:
\begin{align} \label{barChypergeo2}
  \bar C^{n,k}_{n-k,i} 
  =
  -\frac{ (2n-k)! }{ (n+1) (n-k-1)! (n-k+1)! (k-1)! }
  {}_3 F_2 \left( \begin{array}{c} k-n+1 \, ; \, n+1 \, ; \, i-k+n+1 \\ n-k+2 \, ; \, n+2 \end{array}  \Big| 1 \right),
\end{align}
and our goal is to prove 
\begin{align} \label{Cbarbin}
  \bar C^{n,k}_{n-k,i} 
  =
  (-1)^i \binom{n}{k-i}. 
\end{align}
We use \eqref{eqcontig} once again, to find:
\begin{align*}
 (k-i) \bar C^{n,k}_{n-k,i}  =  
    \tfrac{ (2n-k)! }{ (n-k-1)! (n-k+1)! (k-1)! }
    {}_2 F_1 \left( \begin{array}{c} k-n+1 \, ; \, i-k+n+1 \\ n-k+2  \end{array}  \Big| 1 \right)
   -  (n-k+i+1) \bar C^{n,k}_{n-k,i+1}.
\end{align*}
The series can be evaluated using~\eqref{gauss}:
\begin{align*}
  {}_2 F_1 \left( \begin{array}{c} k-n+1 \, ; \, i-k+n+1 \\ n-k+2  \end{array}  \Big| 1 \right)
 =
  \frac{ (-i+1)_{n-k-1} }{ (n-k+2)_{n-k-1}  }.
\end{align*}
The numerator is the product $(-i+1)\dots (n-k-i-1)$, so it is $0$ if $i\geq 1$ and $i\leq n-k-1$.
So, in the range $1\leq i \leq n-k-1 $, we have:
\begin{align*}
  (k-i) \bar C^{n,k}_{n-k,i} = - (n-k+i+1) \bar C^{n,k}_{n-k,i+1}.
\end{align*}
Since the right hand side of \eqref{Cbarbin} satisfies the same relation, it remains only to prove \eqref{Cbarbin}
for the special value $i=1$. From \eqref{barChypergeo2}, we have:
\begin{align*}
  \bar C^{n,k}_{n-k,1} 
  & =
    -\frac{ (2n-k)! }{ (n+1) (n-k-1)! (n-k+1)! (k-1)! }
    {}_2 F_1 \left( \begin{array}{c} k-n+1 \, ; \, n+1 \\  n+2 \end{array}  \Big| 1 \right) \\
  & =
    -\frac{ (2n-k)! }{ (n+1) (n-k-1)! (n-k+1)! (k-1)! }
    \times \frac{ (1)_{n-k-1} }{ (n+2)_{n-k-1} } \\
  & =
    -\frac{ (2n-k)! }{ (n+1)  (n-k+1)! (k-1)! (n+2)_{n-k-1} } 
    = 
    -\frac{n!}{(n-k+1)! (k-1)!} = - \binom{n}{k-1}.
\end{align*}
This completes the proof of \eqref{Cbarbin}, hence of \eqref{barCeqbins}, hence of \eqref{cequala}, hence of \eqref{sumcsuma},
and the result follows.
\end{proof}

%
%
%
%
%

\renewcommand{\arraystretch}{2}

\begin{table}[h!tp] \centering  \tiny

\begin{sideways}
 $S_1[n,k]$ 
\end{sideways}
\hspace{6mm}
\begin{sideways}
\begin{tabular}{c|c|c|c|c|c|c|c|c|}
 $k \backslash n$  & 0 & 1 & 2 & 3 & 4 & 5 & 6 & 7 \\[1mm] \hline
                0  & 1 & . & . & . & . & . & . & . \\[1mm] \hline
                1  & 0 & 1 & . & . & . & . & . & . \\[1mm] \hline
                2  & 0 & 1 & 1 & . & . & . & . & . \\[1mm] \hline
                3  & 0 & $q+1$ & 3 & 1 & . & . & . & . \\[1mm] \hline
                4  & 0 & $q^3 + 2q^2 + 2q + 1$ & $5q+6$ & 6 & 1 & . & . & . \\[1mm] \hline
                5  & 0 & $q^6 + 3q^5 + 5q^4 + 6q^3 + 5q^2 + 3q + 1$ & $7q^3 + 15q^2 + 18q + 10$ & $15q+20$ & 10 & 1 & . & . \\[1mm] \hline
                6  & 0 & $q^{10} + 4q^9 + 9q^8 + 15q^7 + 20q^6  + $ & $9q^6 + 28q^5 + 50q^4 + $ 
                    & $28q^3 + 63q^2 + 84q + 50$ & $35q + 50$ & 15 & 1 & . \\
                   &   & $22q^5 + 20q^4 + 15q^3 + 9q^2 + 4q + 1$ & $67q^3 + 63q^2 + 42q + 15 $ & & & & & \\[1mm] \hline
                7  & 0 & $q^{15} + 5q^{14} + 14q^{13} + 29q^{12} + 49q^{11} + 71q^{10} + 90q^9 +   $ & $11q^{10} + 45q^9 + 105q^8 + 184q^7 + 264q^6 +  $ 
                    & $45q^6 + 144q^5 + 270q^4 +  $ & $84q^3 + 196q^2 +  $ & $70q$ & 21 & 1 \\ 
                   &   & $101q^8 + 101q^7 + 90q^6 + 71q^5 + 49q^4 + 29q^3 + 14q^2 + 5q + 1$ & $315q^5 + 313q^4 + 258q^3 + 168q^2 + 80q + 21$ & $388q^3 + 392q^2 + 280q + 105 $ & $280q + 175$ & $+105$ & & \\
\end{tabular} 
\end{sideways}
\hspace{2cm}
\begin{sideways}
$S_2[n,k]$ 
\end{sideways}
\hspace{6mm}
\begin{sideways}
\begin{tabular}{c|c|c|c|c|c|c|c|c|}
 $k \backslash n$  & 0 & 1 & 2 & 3 & 4 & 5 & 6 \\[1mm] \hline
                0  & 1 & . & . & . & . & . & . \\[1mm] \hline
                1  & 0 & 1 & . & . & . & . & . \\[1mm] \hline
                2  & 0 & 1 & 1 & . & . & . & . \\[1mm] \hline
                3  & 0 & 1 & 3 & 1 & . & . & . \\[1mm] \hline
                4  & 0 & 1 & $q+6$ & 6 & 1 & . & . \\[1mm] \hline
                5  & 0 & 1 & $q^2 + 4q + 10$ & $5q+20$ & 10 & 1 & . \\[1mm] \hline
                6  & 0 & 1 & $q^3 + 5q^2 + 10q + 15$ & $q^3 + 9q^2 + 30q + 50$ & $15q + 50$ & 15 & 1 \\[1mm] \hline
                7  & 0 & 1 & $q^4 + 6q^3 + 15q^2 + 20q + 21$ & $q^5 + 5q^4 + 22q^3 + 63q^2 + 105q + 105$ & $7q^3 + 42q^2 + 126q + 175$ & $35q+105$ & 21 \\[1mm] \hline
                8  & 0 & 1 & $q^6 + 8q^5 + 28q^4 + $ & $q^9 + 7q^8 + 29q^7 + 83q^6 + 191q^5 +$
                   & $q^9 + 6q^8 + 30q^7 + 110q^6 + 315q^5 +$ & $9q^6 + 72q^5 + 270q^4 + 804q^3 +$
                   & $84q^3 + 378q^2 $ \\
                   &   &   & $56q^3 + 70q^2 + 56q + 36$ & $ 376q^4 + 616q^3 + 756q^2 + 630q + 336$ & $ 720q^4 + 1380q^3 + 2016q^2 + 2016q + 1176$ 
                   & $ 1680q^2 + 2352q + 1764$ & 
\end{tabular} 
\end{sideways}
\caption{Small values of the $q$-Stirling numbers.}
\end{table}

\setlength{\parindent}{0mm}

\end{document}